\newtheorem{theorem}{Theorem}
\newtheorem{lemma}{Lemma}
\newtheorem{proposition}{Proposition}
\newtheorem{conjecture}{Conjecture}
\theoremstyle{remark}
\title[\title{On the iterates of the Ramanujan $\tau$ function}]{On the prime factors of the iterates of the Ramanujan $\tau$--function}
\author[F. Luca, S. Mabaso and P. St\u anic\u a]{Florian Luca, Sibusiso Mabaso and Pantelimon St\u anic\u a}
\subjclass[2010]{11B39, 11J86}
\keywords{Ramanujan $\tau$-function, Diophantine equations}
\address{Florian Luca \newline
         \indent School of Mathematics, University of the Witwatersrand, \newline
        \indent Private Bag X3, Wits 2050,\newline
         \indent Johannesburg, South Africa}
\address{Research Group in Algebraic Structures and Applications, King Abdulaziz University,\newline 
         \indent Jeddah, Saudi Arabia}
\address{Centro de Ciencias Matem\'aticas, UNAM,\newline
    \indent Morelia, Mexico}                  
\email{Florian.Luca@wits.ac.za}
\address{Sibusiso Mabaso \newline
          \indent Mangosuthu University of Technology, \newline
        \indent 511 Griffiths Mxenge Hwy,\newline
        \indent Umlazi, Durban, 4301, South Africa}
\email{ASmabaso@mut.ac.za}         
\address{Pantelimon St\u anic\u a\newline
              \indent Naval Postgraduate School, \newline
              \indent Applied Mathematics Department,\newline
              \indent Monterey, CA 93943--5216, USA}
\email{pstanica@nps.edu}                      
\begin{document}

% ***********************************************************************************

\begin{abstract}
In this paper, for a positive integer $n\ge 1$, we look at the size and prime factors of the iterates of the Ramanujan $\tau$ function applied to $n$. 
\end{abstract}

\maketitle
% ***********************************************************************************

\section{Introduction}

The Ramanujan $\tau$-function $\tau(n)$ is given as the coefficient of $q^n$ in the expansion
$$
q\left(\prod_{k=1}^{\infty} (1-q^k)\right)^{24}=\sum_{n\ge 1} \tau(n)q^n,\quad {\text{\rm where}}\quad |q|<1.
$$
It is well-known that $\tau$ is a multiplicative function. That is, $\tau(1)=1$ and $\tau(mn)=\tau(m)\tau(n)$ holds for all coprime positive integers $m,n$. Further, 
\begin{equation}
\label{eq:linrec}
\tau(p^{a+2})=\tau(p)\tau(p^{a+1})-p^{11}\tau(p^a)\quad {\text{\rm holds~for~all~primes}}\quad p\ge 2\quad {\text{\rm and~integers}}\quad a\ge 0.
\end{equation}
In addition, $|\tau(p)|<2p^{11/2}$ holds for all primes $p$. In particular, 
$$
\tau(p^a)=\frac{\alpha_p^{a+1}-\beta_p^{a+1}}{\alpha_p-\beta_p}\quad {\text{\rm holds~for~all}}\quad a\ge 1,
$$
where $\alpha_p,\beta_p$ are the roots of the quadratic polynomial $x^2-\tau(p)x+p^{11}$. Since its discriminant $(\alpha_p-\beta_p)^2=\tau(p)^2-4p^{11}$ is negative, it follows that $\alpha_p,\beta_p$ are 
complex conjugates. 
 
In this paper, we look at the dynamical system obtained by iteratively applying $\tau$ to a positive integer $n$. There are two obstructions to doing so. The first obvious one is $\tau(n)$ is negative for some $n$. For example, 
$\tau(2)=-24$. To deal with this, we extend $\tau$ to negative numbers by putting $\tau(n):=\tau(|n|)$ for any nonzero integer $n$. A second more subtle obstruction appears if $\tau(n)=0$ for some $n$. 
It is a conjecture of Lehmer that $\tau(n)\ne 0$ for all $n$. This has not yet been proved. If it is false, then there exists a prime $p$ such that $\tau(p)=0$. We have the following lemma.

\begin{lemma}
\label{lem:1}
Assume that the Lehmer conjecture is false, namely that $\tau(n)=0$ for some integer $n$. Then for every prime $q$, there exist $a_q$ such that $\tau(\tau(q^{a_q}))=0$.  
\end{lemma}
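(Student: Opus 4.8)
The plan is to convert the hypothesis into the existence of a \emph{prime} $P$ with $\tau(P)=0$, and then, for each prime $q$, to pin down an exponent $a_q$ for which $\tau(q^{a_q})$ is divisible by $P$ to an \emph{odd} power; multiplicativity of $\tau$, together with $\tau(P^c)=0$ for every odd $c$, will then yield $\tau(\tau(q^{a_q}))=0$.

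For the reduction I would first establish the dichotomy: for a prime $r$, one has $\tau(r^c)=0$ for some $c\ge1$ \emph{if and only if} $\tau(r)=0$. Writing $\tau(r)=2r^{11/2}\cos\theta_r$ with $\theta_r\in(0,\pi)$ --- the discriminant being negative forces $\sin\theta_r\ne0$ --- the formula for $\tau(p^a)$ from the introduction gives $\tau(r^c)=r^{11c/2}\sin\big((c+1)\theta_r\big)/\sin\theta_r$, so $\tau(r^c)=0$ forces $(c+1)\theta_r\in\pi\bbZ$, hence $\theta_r\in\pi\bbQ$. Then $\cos(2\theta_r)=\tau(r)^2/(2r^{11})-1$ is a rational value of the cosine at a rational multiple of $\pi$, so Niven's theorem pins it down to $\{0,\pm\tfrac12,\pm1\}$, i.e.\ $\tau(r)^2\in\{0,r^{11},2r^{11},3r^{11},4r^{11}\}$; since $\tau(r)^2$ is a perfect square and $\tau(2)=-24$, $\tau(3)=252$, this forces $\tau(r)=0$. (The converse is trivial, with $c=1$.) Moreover \eqref{eq:linrec} with $\tau(r)=0$ gives $\tau(r^{c+2})=-r^{11}\tau(r^c)$, so $\tau(r^c)=0$ for all odd $c$ and $\tau(r^2)=-r^{11}$. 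Finally, since $\tau(n)=0$ and $\tau$ is multiplicative, $\tau(P^A)=0$ for some prime $P$ and $A\ge1$, whence $\tau(P)=0$; note $P\ne2$ since $\tau(2)\ne0$.

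Now fix a prime $q$. If $\tau(q)=0$ (in particular if $q=P$), I would take $a_q=2$: then $\tau(q^2)=-q^{11}\ne0$, so $\tau(\tau(q^2))=\tau(q^{11})=0$ because $11$ is odd. Otherwise $\tau(q)\ne0$, and by the dichotomy $\tau(q^a)\ne0$ for all $a$. Here I would exploit that $a\mapsto\tau(q^a)$ is, up to an index shift, the Lucas sequence $U_m=(\alpha_q^m-\beta_q^m)/(\alpha_q-\beta_q)$: indeed $U_0=0$, $U_1=1$, $U_{m+1}=\tau(q)U_m-q^{11}U_{m-1}$, and $\tau(q^a)=U_{a+1}$. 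Since $q\ne P$ we have $P\nmid q^{11}$, so $(U_m\bmod P)$ is purely periodic with $U_0\equiv0$; hence the rank of apparition $\rho=\min\{m\ge1:P\mid U_m\}$ is finite and $\ge2$. By the lifting-the-exponent law for Lucas sequences at the odd prime $P$ (which does not divide $q^{11}$), $v_P(U_{\rho k})=v_P(U_\rho)+v_P(k)$ for all $k\ge1$; since $v_P(U_\rho)$ and $v_P(U_{\rho P})=v_P(U_\rho)+1$ have opposite parity, I pick $k\in\{1,P\}$ so that this valuation is odd and set $a_q=\rho k-1\ge1$. Then $v_P(\tau(q^{a_q}))=2j+1$ for some $j\ge0$, so $\tau(q^{a_q})=\pm P^{2j+1}M$ with $P\nmid M$ and $M\ne0$, and multiplicativity together with $\tau(P^{2j+1})=0$ gives $\tau(\tau(q^{a_q}))=\tau(P^{2j+1})\tau(|M|)=0$.

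The step I expect to be the real obstacle is the reduction --- passing from the bare hypothesis $\tau(n)=0$ to a prime $P$ with $\tau(P)=0$ --- because that is where one must combine the Ramanujan--Petersson bound (ensuring $\alpha_r,\beta_r$ are complex conjugates of modulus $r^{11/2}$), Niven's theorem, and the explicit values $\tau(2)$ and $\tau(3)$. The remainder is a routine, if slightly delicate, application of the arithmetic of Lucas sequences, the one point requiring attention being to secure an \emph{odd} $P$-adic valuation rather than merely a nonzero one.
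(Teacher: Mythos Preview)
Your proof is correct and follows essentially the same route as the paper's: locate a prime $P$ with $\tau(P)=0$, use the rank of apparition of $P$ in the Lucas sequence $U_m=\tau(q^{m-1})$ together with the lifting-the-exponent law to secure an odd $P$-adic valuation of some $\tau(q^{a_q})$, and finish by multiplicativity and $\tau(P^{\text{odd}})=0$. The only minor differences are that you justify the reduction $\tau(r^c)=0\Rightarrow\tau(r)=0$ via Niven's theorem (the paper simply asserts the existence of such a prime and instead checks directly that $\alpha_q/\beta_q$ is not a root of unity of order $1,2,3,4,6$), and that you work with the raw sequence $U_m$ rather than first extracting the $q$-power gcd as the paper does---harmless here since $P\ne q$.
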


To deal with this situation, we can also put, by definition $\tau(0):=0$. One obvious question to ask is what happens with $\tau^k(n)$ for positive integers $k$ and $n$. We conjecture that the set 
\begin{equation}
\label{eq:Orb}
{\rm Orb}_{\tau}(n):=\{\tau^k(n): k\ge 0\}
\end{equation}
is infinite for all $n>1$. In what follows we give some support to this conjecture. For a positive integer $m$ let $P(m)$ be the largest prime factor of $m$.

\begin{proposition}
\label{prop:1}
Assume the Lehmer conjecture holds. Then for  integers $k\ge 1$ and $n$ even, we have $P(\tau^{(k)}(n))\ge 3^{k-1}+2$. 
\end{proposition}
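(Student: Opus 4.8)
The plan is to reduce the statement to a lower bound on the largest prime factor of the terms of a single explicit Lucas sequence, and then to feed in a primitive‑divisor theorem. The first step records the structure of $\tau$ at powers of $2$. The roots of $x^2-\tau(2)x+2^{11}=x^2+24x+2048$ are $\alpha_2=8\rho$ and $\beta_2=8\bar\rho$, where $\rho,\bar\rho$ are the complex conjugate roots of $x^2+3x+32$ (check: $\rho\bar\rho=2^{11}/64=32$ and $\rho+\bar\rho=\tau(2)/8=-3$). Hence, for $a\ge 1$,
$$
\tau(2^a)=\frac{\alpha_2^{a+1}-\beta_2^{a+1}}{\alpha_2-\beta_2}=8^a\cdot\frac{\rho^{a+1}-\bar\rho^{a+1}}{\rho-\bar\rho}=2^{3a}U_{a+1},
$$
where $(U_m)_{m\ge 0}$ is the Lucas sequence given by $U_0=0$, $U_1=1$, $U_{m+1}=-3U_m-32U_{m-1}$. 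An immediate induction shows every $U_m$ is odd, so $v_2(\tau(2^a))=3a$ exactly, and $P(\tau(2^a))=P(U_{a+1})$ as soon as one knows $|U_{a+1}|>1$.

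Next I would propagate the $2$-adic valuation along the orbit. Under the Lehmer conjecture $\tau^{(j)}(n)\ne 0$ for all $j$, so $a_j:=v_2(\tau^{(j)}(n))$ is a well-defined nonnegative integer (with the convention $v_2(x):=v_2(|x|)$), and $a_0=v_2(n)\ge 1$ because $n$ is even. Writing $\tau^{(j-1)}(n)=2^{a_{j-1}}v$ with $v$ odd, multiplicativity gives $\tau^{(j)}(n)=\tau(2^{a_{j-1}})\tau(v)$, whence $a_j=3a_{j-1}+v_2(\tau(v))\ge 3a_{j-1}$; iterating, $a_j\ge 3^{j}$ for all $j\ge 0$, and in particular $a_{k-1}\ge 3^{k-1}$. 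Moreover $\tau(2^{a_{k-1}})\mid\tau^{(k)}(n)$, so
$$
P\bigl(\tau^{(k)}(n)\bigr)\ge P\bigl(\tau(2^{a_{k-1}})\bigr)=P\bigl(U_{a_{k-1}+1}\bigr).
$$
It therefore suffices to prove the self-contained claim $P(U_m)\ge m+1$ for every $m\ge 2$ (which also forces $|U_m|\ge m+1>1$, legitimising the previous display). Granting this, $P(\tau^{(k)}(n))\ge(a_{k-1}+1)+1\ge 3^{k-1}+2$; for $k=1$ this is already the base case $P(\tau(n))\ge 3$.

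The claim on $(U_m)$ is where the real work lies. The pair $(\rho,\bar\rho)$ is non-degenerate: $\rho/\bar\rho$ lies in a quadratic field and is readily checked not to be a root of unity. So by the Bilu--Hanrot--Voutier primitive divisor theorem, for every $m>30$ the term $U_m$ has a primitive prime divisor $\ell$ (rank of apparition exactly $m$). Such an $\ell$ is odd, does not divide the discriminant $-119=-7\cdot 17$, and its rank $m$ divides $\ell-\left(\frac{-119}{\ell}\right)$, so $\ell\equiv\pm 1\pmod m$ and hence $\ell\ge m-1$. If $\ell\ge m+1$ we are done; the only borderline case is $\ell=m-1$, which can occur only when $m-1$ is an odd prime with $\left(\frac{-119}{m-1}\right)=-1$. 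Here I would show that $U_m$ still carries a further prime factor exceeding $m$: the primitive part of $U_m$ (essentially the $m$-th cyclotomic factor $\Phi_m(\rho,\bar\rho)$) is, by quantitative estimates of Stewart type, far too large to be a power of the single prime $m-1$, so there is a second primitive prime divisor, which is odd, $\equiv\pm 1\pmod m$, and distinct from $m-1$, hence $\ge m+1$; the finitely many values $2\le m\le 30$ are verified by direct computation. The main obstacle is exactly this last point — passing, in the borderline case, from "a primitive divisor exists" to "a primitive divisor $\ge m+1$ exists" — which is what forces one to invoke quantitative lower bounds (sizes of primitive parts, or Stewart's bound $P(U_m)\gg m\exp(\log m/(104\log\log m))$) on top of the bare existence statement.
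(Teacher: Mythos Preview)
Your overall reduction---growing the $2$-adic valuation along the orbit, factoring $\tau(2^a)=2^{3a}U_{a+1}$, and invoking primitive divisors of the Lucas sequence $(U_m)$---is exactly the paper's strategy. The divergence is in the endgame. You aim for the uniform claim $P(U_m)\ge m+1$ for every $m\ge 2$, and you correctly isolate the borderline case $\ell=m-1$ as the obstruction; but your proposed fix (producing a second primitive prime via Stewart-type lower bounds on the primitive part) is asserted rather than carried out, and Stewart's theorem itself is stated only for $m>m_0$ with an unspecified $m_0$. As written, the range $31\le m\le m_0$ is not covered, so this is a genuine gap.

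The paper sidesteps this completely. It uses only the weak consequence of Bilu--Hanrot--Voutier, namely that a primitive divisor $\ell$ of $U_{a+1}$ satisfies $\ell\equiv\pm 1\pmod{a+1}$ and hence $\ell\ge a$, giving merely $P(\tau^{(k)}(n))\ge a\ge 3^{k-1}$. The extra ``$+2$'' then comes for free from the elementary observation that for $k\ge 3$ neither $3^{k-1}$ (a proper multiple of $3$) nor $3^{k-1}+1$ (even and larger than $2$) is prime, so any prime at least $3^{k-1}$ is automatically at least $3^{k-1}+2$. The cases $k=1,2$ are handled directly by checking $P(U_{a+1})\ge 3$ for $a\ge 1$ and $P(U_{a+1})\ge 5$ for $a\ge 3$, both of which follow from the fact that the odd numbers $U_m$ have primitive divisors for all $m\ge 2$ together with $U_2=-3$. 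Thus no quantitative input of Stewart type is needed; the paper mentions Stewart's bound only in a remark, as a way to sharpen the conclusion for large $k$.
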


In particular, if the set ${\text{\rm Orb}}_{\tau}(n)$ contains an even element and the Lehmer conjecture holds, then ${\text{\rm Orb}}_{\tau}(n)$ is infinite. It remains to look at the situation when 
${\text{\rm Orb}}_{\tau}(n)$ contains only odd numbers. The smallest odd number (in absolute value) is $1$. We have the following proposition.

\begin{proposition}
\label{prop:2}
If $|\tau(n)|=1$, then $n=1$. 
\end{proposition}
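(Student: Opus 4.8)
The plan is to reduce, by the multiplicativity of $\tau$, to the claim that $\tau(p^a)\ne\pm1$ for every prime $p$ and every integer $a\ge1$. Granting this, if $n=\prod_i p_i^{a_i}$ satisfies $|\tau(n)|=1$, then $1=\prod_i|\tau(p_i^{a_i})|$ forces $|\tau(p_i^{a_i})|=1$ for every $i$ (a product of integers has absolute value $1$ only when each factor is $\pm1$), which is impossible unless $n$ has no prime factor, i.e.\ $n=1$. So one assumes, for contradiction, that $\tau(p^a)=\pm1$ with $p$ prime and $a\ge1$. The first ingredient is a parity fact: reducing the defining product for $\tau$ modulo $2$ (using $(1-q^k)^8\equiv 1-q^{8k}\pmod2$ and Jacobi's identity $\prod_{k\ge1}(1-x^k)^3=\sum_{j\ge0}(-1)^j(2j+1)x^{j(j+1)/2}$) shows that $\tau(m)$ is odd exactly when $m$ is an odd perfect square. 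Hence if $p=2$, or if $a$ is odd, then $p^a$ is not an odd square, $\tau(p^a)$ is even, and so $\tau(p^a)\ne\pm1$; from here on $p$ is an odd prime and $a$ is even.

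Next I would settle the case $p\mid\tau(p)$ at once: then $\tau(p)\equiv0\pmod p$, so in the recurrence $\tau(p^{a+2})=\tau(p)\tau(p^{a+1})-p^{11}\tau(p^a)$ both terms on the right are divisible by $p$, whence $\tau(p^a)\equiv0\pmod p$ for all $a\ge1$ and $|\tau(p^a)|\ge p\ge3$ — a contradiction. We may therefore assume $p\nmid\tau(p)$, i.e.\ $(\tau(p),p^{11})$ is a Lucas pair; writing $u_m=(\alpha_p^{\,m}-\beta_p^{\,m})/(\alpha_p-\beta_p)$ we have $\tau(p^a)=u_{a+1}$. Let $q$ be the smallest prime divisor of $m:=a+1$; since $a$ is even, $q$ is odd and $q\ge3$. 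By the standard fact that $u_q\mid u_m$ whenever $q\mid m$, together with $|u_m|=1$, we get $|u_q|=1$, that is $|\tau(p^{q-1})|=1$. It thus remains to rule out $\tau(p^{q-1})=\pm1$ for an odd prime $q$.

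For $q=3$ this reads $\tau(p)^2-p^{11}=\pm1$. The sign $+1$ gives $(\tau(p)-1)(\tau(p)+1)=p^{11}$ with the two factors coprime (they are odd and differ by $2$), so $\{|\tau(p)-1|,|\tau(p)+1|\}=\{1,p^{11}\}$, forcing $p^{11}-1=2$, which is absurd. The sign $-1$ gives $p^{11}-\tau(p)^2=1$ with $p\ge3$, $|\tau(p)|\ge2$, and exponents $11,2\ge2$; this contradicts Mih\u{a}ilescu's theorem (the Catalan conjecture). For $q\ge5$ I would invoke the primitive divisor theorem for Lucas sequences of Bilu--Hanrot--Voutier: $u_q$ possesses a primitive prime divisor unless the Lucas pair $(\tau(p),p^{11})$ appears in an explicit finite list of exceptions, each of which has $|\alpha_p\beta_p|$ bounded by an absolute constant. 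Since $|\alpha_p\beta_p|=p^{11}\ge3^{11}$, no exception can occur, so $u_q$ has a primitive prime divisor and in particular $|u_q|\ge2$ — again a contradiction. This exhausts all cases and proves the proposition.

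The weight of the argument clearly sits in the last paragraph: the reductions are elementary, but disposing of the residual equalities $\tau(p^{q-1})=\pm1$ for odd primes $q$ appears to need genuinely deep Diophantine tools (Mih\u{a}ilescu's theorem for $q=3$, the primitive divisor theorem for $q\ge5$), and the index $q=3$ must be split off precisely because the exceptional Lucas pairs for that index form an infinite family. The feature that makes the primitive divisor theorem applicable for $q\ge5$ is that the norm $p^{11}=\alpha_p\beta_p$ of the underlying Lucas pair grows with $p$, so the finitely many exceptional pairs, which have bounded parameters, are never attained. Two minor points to handle carefully are the $2$-adic computation behind the parity fact and the verification that the exceptional Lucas pairs tabulated in Bilu--Hanrot--Voutier indeed all have $|\alpha\beta|$ small.
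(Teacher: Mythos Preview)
Your proof is correct and follows essentially the same strategy as the paper's: reduce by multiplicativity and parity to $\tau(p^a)=\pm1$ with $p$ odd and $a$ even, rule out $p\mid\tau(p)$ by the recurrence, dispose of the bottom case via Mih\u{a}ilescu, and invoke the Bilu--Hanrot--Voutier classification for the remaining indices. Your passage to a prime index $q\mid a+1$ via $u_q\mid u_{a+1}$, your elementary handling of the sign $+1$ at $q=3$, and your dismissal of the BHV exceptional pairs by the single observation $\alpha_p\beta_p=p^{11}\ge 3^{11}$ are minor streamlinings of the paper's direct case-by-case check of Table~1, but the architecture is the same.
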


From now on, we assume that $n>1$, so $|\tau(n)|>1$ and that $\tau(n)$ is odd. It is then well-known that $n$ is a perfect square. Computations suggest that in this case $|\tau(n)|$ is much larger than $n$. If this was true for all perfect squares $n>1$, then in case ${\text{\rm Orb}}_{\tau}(n)$ contains only odd numbers,  the numbers 
$$
|n|,\quad |\tau(n)|,\quad |\tau^{(2)}(n)|,\quad,\ldots
$$
would form  a strictly increasing sequence, so in particular ${\text{\rm Orb}}_{\tau}(n)$ would also be infinite. We cannot prove that this is indeed so,  but we can almost prove it (up to finitely many putative exceptions) under the {\it abc}-conjecture. 

\begin{proposition}
\label{prop:3}
Assume the $abc$-conjecture and the Lehmer conjecture. There exists $n_0$ such that if $n>n_0$ is a perfect square, then $|\tau(n)|>n$. In particular, if ${\text{\rm Orb}}_{\tau}(n)$ contains only odd numbers, then 
${\text{\rm Orb}}_{\tau}(n)$ is infinite. 
\end{proposition}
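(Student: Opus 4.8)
The plan is to show that if $n>1$ is a perfect square with $\tau(n)$ odd, then $|\tau(n)|$ is forced to be large by the multiplicativity of $\tau$ together with the $abc$-conjecture. Write $n=m^2$ and factor $n=\prod_{i} p_i^{2e_i}$, so that $\tau(n)=\prod_i \tau(p_i^{2e_i})$. Because $\tau(n)$ is odd we know $n$ is an odd perfect square (in fact more: each $p_i$ is odd and the exponents are constrained, but the key point is oddness). The recursion \eqref{eq:linrec} and the closed form $\tau(p^a)=(\alpha_p^{a+1}-\beta_p^{a+1})/(\alpha_p-\beta_p)$ with $|\alpha_p|=|\beta_p|=p^{11/2}$ give the Deligne bound $|\tau(p^a)|\le (a+1)p^{11a/2}$, so trivially $|\tau(n)|\le d(n) n^{11/2}$, which is an \emph{upper} bound and goes the wrong way. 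The real content must therefore come from a lower bound, and this is where $abc$ enters.

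The mechanism I would use: for a prime power $p^a$ one has the identity $\alpha_p^{a+1}-\beta_p^{a+1}=(\alpha_p-\beta_p)\tau(p^a)$, and separately $\alpha_p\beta_p=p^{11}$, so $\alpha_p^{a+1}\beta_p^{a+1}=p^{11(a+1)}$. Combining, $\tau(p^a)^2(\tau(p)^2-4p^{11}) = (\alpha_p^{a+1}-\beta_p^{a+1})^2 = (\alpha_p^{a+1}+\beta_p^{a+1})^2 - 4p^{11(a+1)}$. Set $u=\alpha_p^{a+1}+\beta_p^{a+1}\in\bbZ$ (it satisfies the same recurrence); then we get the integer relation
\[
u^2 - \tau(p^a)^2\,(\tau(p)^2-4p^{11}) = 4 p^{11(a+1)},
\]
i.e. an $S$-unit-type equation $X + Y = Z$ with $X=u^2$, $Y=-\tau(p^a)^2(\tau(p)^2-4p^{11})$, $Z=4p^{11(a+1)}$. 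Applying the $abc$-conjecture to this triple, the radical is at most (a constant times) $p \cdot |\tau(p^a)| \cdot \operatorname{rad}(\tau(p)^2-4p^{11}) \cdot |u|$, and since $|u|\le 2p^{11(a+1)/2}$ we can bound $\operatorname{rad}$ crudely; comparing with $Z=4p^{11(a+1)}$ forces $p^{11(a+1)} \ll_\varepsilon (p\,|\tau(p^a)|\, p^{11(a+1)/2})^{1+\varepsilon}$, which after rearranging yields $|\tau(p^a)| \gg_\varepsilon p^{11(a+1)(1-\varepsilon)/2 - 1 - \varepsilon} \gg p^{a+1}$ once $p$ is large, giving $|\tau(p^a)|>p^{2e}=$ (the $p$-part of $n$) for $p$ beyond an absolute bound. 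Multiplying over the prime factors $p_i$ of $n$ and handling the finitely many small primes separately (here the Lehmer conjecture is used to guarantee none of the finitely many relevant $\tau(p_i^{e_i})$ vanish, so the product is genuinely $\ge 1$ in absolute value on those factors), we conclude $|\tau(n)|>n$ for $n>n_0$. The last sentence of the proposition is then immediate: if $\operatorname{Orb}_\tau(n)$ contains only odd numbers and some iterate exceeds $n_0$, the sequence of absolute values of iterates is eventually strictly increasing hence unbounded; and if all iterates stay $\le n_0$ the orbit lies in a finite set but cannot cycle to a fixed point or short cycle of odd values $>1$ because each step strictly increases the value — a contradiction unless it is infinite, so in all cases it is infinite.

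I expect the main obstacle to be \emph{getting the $abc$ application to produce a lower bound that beats $n$ uniformly}, rather than just $n^{1-\varepsilon}$ or $n^{1/2}$. The crude estimate $|u|\le 2p^{11(a+1)/2}$ costs a full factor of $p^{11(a+1)/2}$ inside the radical, which is exactly the same order as $\sqrt{Z}$, so one is working at the edge: the inequality that falls out is roughly $|\tau(p^a)| \ge p^{(11(a+1)/2)(1-\varepsilon) - O(1)}$, and one must check carefully that the exponent stays comfortably above $a+1$ (it does, since $11(a+1)/2 > a+1$ with lots of room, but the $\varepsilon$ and the $O(1)$ loss must be controlled, which is why only $n>n_0$ is claimed). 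A secondary technical point is the passage from a single prime power to a general perfect square $n$: one needs $\tau(p_i^{2e_i})\neq 0$ for the small primes $p_i$ (finitely many cases, handled by Lehmer, which is assumed) and one should be slightly careful that the contributions of the small-prime factors, though possibly small, are at least $1$ in absolute value so they do not drag the product below $n$; choosing $n_0$ large enough absorbs any bounded multiplicative loss. Finally, one must make sure the $abc$-triple is primitive (divide out the gcd) before applying the conjecture — the common factor is a bounded power of $2$ and of $\operatorname{rad}(\tau(p)^2-4p^{11})$, harmless.
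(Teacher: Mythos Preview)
Your route is genuinely different from the paper's. The paper splits into two regimes: for $a>10^{16}$ it uses a Baker--type lower bound on $|(\alpha_p/\beta_p)^{a+1}-1|$ to obtain $|\tau(p^a)|>p^{2a}$ unconditionally, and for each fixed even $a\in[2,10^{16}]$ it applies the $abc$-conjecture through Granville's theorem on radicals of binary forms (with separate ad~hoc treatments of $a=2,4,6$) to get $|\tau(p^a)|>p^{2a}$ (or $>p^{9}$ when $a=6$) for $p>P_a$. Your single $abc$ application to the Lucas--Pell identity $u^{2}-(\tau(p)^{2}-4p^{11})\,\tau(p^{a})^{2}=4p^{11(a+1)}$ is more uniform: because the $abc$ constant $C_{\varepsilon}$ is absolute, one inequality covers all $a$ at once and Baker's theorem is not needed. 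That is a real simplification.

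There are, however, two concrete problems with the execution. First, a slip: in your displayed inequality $p^{11(a+1)}\ll_{\varepsilon}(p\,|\tau(p^{a})|\,p^{11(a+1)/2})^{1+\varepsilon}$ you have silently dropped the factor $\operatorname{rad}(\tau(p)^{2}-4p^{11})$, which can be as large as $4p^{11}$; keeping it changes your exponent from $11(a+1)(1-\varepsilon)/2-1$ to about $11(a+1)(1-\varepsilon)/2-12$. (This still exceeds $2a$ for every even $a\ge 2$, so the method survives.) Second, and more seriously, your assembly does not work as written. You use only $|\tau(p^{a})|>p^{a}$ for the large primes and $|\tau(p^{a})|\ge 1$ for the finitely many small primes, claiming that ``choosing $n_{0}$ large enough absorbs any bounded multiplicative loss''. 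But the exponent at a small prime is \emph{not} bounded: for $n=3^{2A}$ with $A$ arbitrary, every prime is ``small'' and your argument gives only $|\tau(n)|\ge 1$. The remedy is already latent in your own bound: since the margin $11(a+1)/2-12-2a=(7a-13)/2$ grows linearly in $a$, your $abc$ inequality in fact yields $|\tau(p^{a})|>p^{2a}$ for \emph{every} odd prime $p$ once $a$ exceeds some absolute $A_{0}$, and for every even $a\ge 2$ once $p$ exceeds some absolute $P_{0}$. Hence the truly bad prime powers satisfy both $p\le P_{0}$ and $a\le A_{0}$, a finite set with product $B$; writing $n=n_{1}n_{2}$ with $n_{1}\mid B$ gives $|\tau(n)|\ge|\tau(n_{2})|>n_{2}^{2}\ge(n/B)^{2}>n$ for $n>B^{2}$. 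You should also treat the case $p\mid\tau(p)$ explicitly: there the common factor you must divide out before applying $abc$ is a power of $p$ of order $p^{2\lambda(a+1)}$, not merely ``a bounded power of $2$''.
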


Finally, let $n$ be such that ${\text{\rm Orb}}_{\tau}(n)$ has at least cardinality $k+1$ and assume the Lehmer conjecture. Put
$$
{\text{\rm Orb}}_{\tau}(n,k):=\{|n|, |\tau(n)|,\ldots,|\tau^{(k)}(n)|\},
$$
and assume that the elements in the above set are all distinct and nonzero. We ask whether we can say something about the prime factors of the above numbers. Let 
$$
P({\text{\rm Orb}}_{\tau}(n,k)):=\max\{P(m), m\in {\text{\rm Orb}}_{\tau}(n,k)\}.
$$
We have the following result. 

\begin{proposition}
\label{prop:4}
Assume $k\ge 1$ is an integer. If the Lehmer conjecture holds and ${\text{\rm Orb}}_{\tau}(n,k)$ has cardinality $k+1$, then 
\begin{equation}
\label{eq:Orbnk}
P({\text{\rm Orb}}_{\tau}(n,k))>\log(k/2).
\end{equation}
\end{proposition}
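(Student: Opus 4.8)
\emph{Proof proposal.} One may assume $k$ is large, since otherwise \eqref{eq:Orbnk} is immediate: at most one $m_i:=|\tau^{(i)}(n)|$ can equal $1$ (else $\tau(1)=1$ would repeat), so some $m_i\ge2$ and therefore $P\ge2$. Assume for contradiction that $P:=P({\rm Orb}_\tau(n,k))\le\log(k/2)$; then $m_0,\dots,m_k$ are $k+1$ pairwise distinct positive integers, all $P$-smooth, linked by $m_{i+1}=|\tau(m_i)|$, and the goal becomes to show that such a $\tau$-chain cannot be this long --- precisely that $k$ is at most about $\prod_{p\le P}p=e^{\theta(P)}\le e^{P}$, contradicting $k\ge2e^{P}$.

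\emph{Step 1: bounding the exponents.} If $p^{a}\parallel m_i$ with $i\le k-1$, then $\tau(p^{a})\mid\tau(m_i)=\pm m_{i+1}$ by multiplicativity, so $\tau(p^{a})$ is $P$-smooth. Now $\tau(p^{a})$ is the $(a+1)$-st term of the Lucas sequence of $x^{2}-\tau(p)x+p^{11}$, and after dividing out the power of $p$ it contains (which equals $v_p(\tau(p))\,a$, by \eqref{eq:linrec}) one is left with a non-degenerate Lucas sequence; non-degeneracy uses Lehmer's conjecture together with the fact that $\tau(p)^{2}/p^{11}\notin\{0,1,2,3\}$ (Deligne's bound and unique factorisation). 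By the Bilu--Hanrot--Voutier theorem its $(a+1)$-st term has a primitive prime divisor $\ell$ as soon as $a+1>30$; since the rank of apparition of $\ell$ is $a+1$ and divides $\ell\pm1$, we get $\ell\ge a$, and $\ell\mid m_{i+1}$ forces $\ell\le P$. Hence every exponent of every $m_i$ with $i\le k-1$ is at most $C_0:=\max(30,P)$.

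\emph{Step 2: eliminating the even and the multiple-of-three cases.} From \eqref{eq:linrec} one checks that $v_2(\tau(2^{a}))=3a$ and $v_3(\tau(3^{a}))=2a$ for all $a\ge1$. Consequently, if $2$ or $3$ divides $m_i$ for some $i\le k/2$, then $v_2(m_j)$ (resp.\ $v_3(m_j)$) at least triples (resp.\ doubles) at each step from $j=i$ to $j=k-1$ while remaining $\le C_0$; this forces $3^{k/2-1}\le C_0$, hence $k\le 2+2\log_3 C_0=O(\log P)\ll 2e^{P}$, a contradiction. So we may assume $m_0,\dots,m_{\lceil k/2\rceil}$ are all coprime to $6$. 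Since $\tau(m)$ is odd only when $m$ is an odd perfect square, the numbers $m_0,\dots,m_{\lceil k/2\rceil-1}$ are then perfect squares: we obtain $\lceil k/2\rceil$ distinct $P$-smooth perfect squares coprime to $6$, each with all exponents $\le C_0$.

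\emph{Step 3: counting.} Such a square is determined by a vector $(b_p)_{5\le p\le P}$ with $0\le b_p\le\lfloor C_0/2\rfloor$, so $\lceil k/2\rceil\le(\lfloor C_0/2\rfloor+1)^{\pi(P)-2}$, and comparing this with $k\ge2e^{P}$ through Chebyshev-type estimates for $\pi(P)$ and $\theta(P)$ is intended to close the argument. I expect this comparison to be the main obstacle: the crude count $(\lfloor C_0/2\rfloor+1)^{\pi(P)}$ lies only just below $e^{P}$, so one genuinely needs a quantitative sharpening of Step 1 --- a Stewart-type lower bound for the greatest prime factor of Lucas numbers, yielding $C_0=o(P)$ --- as well as a count for the relevant $P$-smooth squares finer than the plain product of the ranges of the exponents. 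Forcing these estimates below the primorial $\prod_{p\le P}p$, which is exactly what produces the logarithm (and the factor $\tfrac12$) in \eqref{eq:Orbnk}, is where the real work lies.
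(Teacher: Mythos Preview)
Your argument has a genuine gap at Step~3, and the remedy is not the one you suggest. With $C_0\approx P$ the count $(\lfloor C_0/2\rfloor+1)^{\pi(P)-2}$ is of order $(P/2)^{\pi(P)}$, and since $\pi(P)\log(P/2)$ can exceed $P$ (try $P=1000$: $\pi(1000)=168$, $\log 501\approx 6.2$, product $\approx 1045$), this does \emph{not} contradict $k/2\ge e^P$. Stewart-type lower bounds would only push $C_0$ down to $P/\exp(\log P/104\log\log P)$, which is still $P^{1+o(1)}$ and does not help.

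The missing idea is a sharper use of the primitive divisor theorem. You use it to bound the \emph{size} of each exponent by~$P$. What one should do instead is bound, for each fixed prime $p$, the \emph{number of distinct exponents} that occur among $\{v_p(m_i):0\le i\le\lfloor k/2\rfloor\}$. If $p^{a}\parallel m_i$ and $p^{a'}\parallel m_j$ with $2\le a<a'$ both even, then $u_p(a'+1)$ has a primitive prime factor not dividing any earlier $u_p(b+1)$; this prime divides $\tau(p^{a'})\mid m_{j+1}$ and hence lies in the fixed set of $\omega$ primes appearing in the list. Thus distinct positive exponents of $p$ consume distinct primes from that set, so at most $\omega$ of them can occur, and the first-half list $L$ has at most $\omega^\omega$ elements (up to the harmless inclusion of the zero exponent). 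The Rosser--Schoenfeld bound $p_\omega>\omega\log\omega$ then gives
\[
k/2\le\#L\le\omega^\omega=e^{\omega\log\omega}<e^{p_\omega}<e^{P},
\]
since the $\omega$ primes are odd and so $P\ge p_{\omega+1}>p_\omega$. This is exactly the contradiction you were after, with no appeal to Stewart.

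Your Step~2 is correct but more elaborate than needed: once some $m_i$ with $i\le\lfloor k/2\rfloor$ is even, so is $m_{\lfloor k/2\rfloor}$ (evenness propagates forward by $\nu_2(\tau(n))\ge 3\nu_2(n)$), and Proposition~\ref{prop:1} applied to it already yields $P(\tau^{(k)}(n))\ge 3^{\lceil k/2\rceil-1}+2$, which dominates $\log(k/2)$; tracking $v_3$ separately is unnecessary.
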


Finally, let ${\mathcal S}=\{p_1,\ldots,p_s\}$ be a finite set of odd primes arranged increasingly and let $P=p_s$ be the largest. We ask whether it is possible to compute the cardinality 
of the set of $n$ such that 
\begin{equation}
\label{eq:Sunit}
\tau(n)=\pm p_1^{a_1}\cdots p_s^{a_s}\quad {\text{\rm for~some~integer~exponents}}\quad a_1,\ldots,a_s.
\end{equation}
By the multiplicativity of the function $\tau$, it suffices to compute the number of solutions of the form $p^a$ for some odd prime $p$ and even positive integer $a$. If this number is denoted by $\Omega$, then clearly the total number of solutions of \eqref{eq:Sunit} is at most $2^{\Omega}$. 

\begin{proposition}
\label{prop:6}
The number of solutions $n=p^a$ with a prime $p$ and a positive even integer $a$ to the equation \eqref{eq:Sunit} is at most $\max\{P,11\}^{6500(s+4)}$. 
\end{proposition}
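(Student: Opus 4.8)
The plan is to combine the primitive divisor theorem for Lucas sequences with a reduction to Thue--Mahler equations of degree $11$.

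First I would note that a solution is $n=p^a$ with $p$ prime and $a\ge 2$ even, and that $\tau(p^a)=u_{a+1}$ by \eqref{eq:linrec}, where $u_m=(\alpha_p^m-\beta_p^m)/(\alpha_p-\beta_p)$ is a Lucas sequence with $\alpha_p\beta_p=p^{11}$ and $\alpha_p,\beta_p$ non-real conjugates; it is non-degenerate under the Lehmer conjecture, which I would assume throughout (if it fails, a prime $q\in\mathcal{S}$ with $\tau(q)=0$ yields infinitely many solutions $q^{2k}$, so some such hypothesis is indispensable), and $|u_m|\ge 2$ for $m\ge 3$ by Proposition~\ref{prop:2}. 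I would also record that $\tau(p)$ is even and $\tau(p^a)$ odd, consistent with $\mathcal{S}$ being odd. Next I would apply Bilu--Hanrot--Voutier: for $m=a+1>30$, $u_m$ has a primitive prime divisor $\ell$ whose rank of apparition is $m$, so either $\ell\mid m$ (forcing $m=\ell$ prime) or $\ell\equiv\bigl(\tfrac{D_p}{\ell}\bigr)\pmod m$ with $D_p=\tau(p)^2-4p^{11}$ (forcing $\ell\ge m-1$); since $\ell\mid\tau(p^a)$ and the latter is an $\mathcal{S}$-unit, $\ell\le P$, whence $a+1\le\max\{P+1,31\}$. Because $u_d\mid u_{a+1}$ for $d\mid a+1$, the number $\tau(p^{d-1})=u_d$ is also $\pm$ an $\mathcal{S}$-unit for each divisor $d>1$; taking $d$ the least prime factor of $a+1$, I would reduce to bounding, for each prime $\ell$ with $3\le\ell\le\max\{P+1,31\}$, the number $M(\ell)$ of primes $p$ with $u_\ell(p)$ equal to $\pm$ an $\mathcal{S}$-unit --- the total count then being at most $(s+15)\,\pi(\max\{P+1,31\})\,\max_\ell M(\ell)$ (the factor $s+15$ bounding the exponents $a$ attached to a fixed $p$, using distinctness of primitive divisors for $a+1>30$). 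For $p>P$ one then has $p\notin\mathcal{S}$, so $p\nmid\tau(p)$ (else $p\mid\tau(p^a)$), $p$ splits in $K_p=\mathbb{Q}(\alpha_p)$, and $(\alpha_p)=\mathfrak{p}_p^{11}$ with $\mathfrak{p}_p\mid p$.

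To bound $M(\ell)$ I would handle $p\le P$ trivially ($\le\pi(P)$ primes) and, for $p>P$, use the factorisation $u_\ell(p)=\prod_{j=1}^{(\ell-1)/2}\bigl(\tau(p)^2-4\cos^2(\pi j/\ell)p^{11}\bigr)=N_{\mathbb{Q}(\zeta_\ell)^+/\mathbb{Q}}\bigl(\tau(p)^2-4\cos^2(\pi/\ell)p^{11}\bigr)$. Since $4\cos^2(\pi/\ell)$ is a unit of $\mathbb{Q}(\zeta_\ell)^+$ (its norm being $1$), the factor $\mu=\tau(p)^2-4\cos^2(\pi/\ell)p^{11}$ has $\mathcal{S}$-unit norm, hence is a unit away from the primes over $\mathcal{S}$; then $p^{11}$ equals, up to a unit, $\tau(p)^2-\mu=(\tau(p)-\sqrt\mu)(\tau(p)+\sqrt\mu)$, the two factors being coprime away from $2$, so --- using $(\alpha_p)=\mathfrak{p}_p^{11}$ and unramifiedness of $p$ --- each is, up to a unit and an ideal class of order dividing $11$, an $11$-th power of a principal ideal. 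Absorbing the class-group obstruction and reducing the unit modulo $11$-th powers, I would get $\tau(p)-\sqrt\mu=\varepsilon\rho^{11}$; equating $\sqrt\mu$-components gives a Thue--Mahler equation of degree $11$ with right side an $\mathcal{S}$-unit, whose solutions I would count by the explicit bounds of Evertse/Gy\H{o}ry, of the shape $c_1^{c_2(s+1)}$ with $c_1,c_2$ absolute (depending only on the degree $11$). Multiplying over the bounded auxiliary data --- the $\le 2^{s+1}$ square-free parts of the $\mathcal{S}$-unit, the $O(1)$ unit classes and class-group representatives, and the $\le\pi(\max\{P+1,31\})$ primes $\ell$ --- and bookkeeping the constants would give $\max\{P,11\}^{6500(s+4)}$, the base $\max\{P,11\}$ (rather than an absolute constant) appearing because the polynomially-many cases are absorbed into the exponent, and the $11$ being the weight in \eqref{eq:linrec}.

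The step I expect to be the main obstacle is this last reduction: keeping the exponent of the solution bound \emph{linear in $s$} rather than in $Ps$. This requires the field over which $11$-th roots are extracted to have degree $O(1)$ and only $O(s)$ relevant primes, whereas $\mathbb{Q}(\zeta_\ell)^+$ has degree $(\ell-1)/2$, which a priori is as large as $\tfrac12(P-1)$. I would need to show either that for $p>P$ only boundedly many $\ell$ can actually occur, or that the descent can be carried out over a fixed quadratic field (such as $\mathbb{Q}(\sqrt d)$ with $d\mid p_1\cdots p_s$) instead of over $\mathbb{Q}(\zeta_\ell)^+$. The remaining points --- the class-group obstruction when $11\mid h$ and the $2$-adic bookkeeping --- are lengthy but routine.
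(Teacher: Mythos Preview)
Your self-diagnosis is accurate: the obstacle you flag at the end is the crux of the matter, and neither of your proposed fixes is viable. There is no reason only boundedly many $\ell$ should occur (indeed the primitive-divisor bound only gives $\ell\le P+1$), and the descent cannot be carried out over a fixed quadratic field when $\ell\ge 7$, since $4\cos^2(\pi/\ell)$ genuinely generates $\mathbb{Q}(\zeta_\ell)^+$. Over $\mathbb{Q}(\zeta_\ell)^+$ the number of places above $\mathcal S$ is of order $s\cdot(\ell-1)/2$, so Evertse-type bounds give an exponent of order $sP$, not $s$.

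The paper sidesteps $\mathbb{Q}(\zeta_\ell)^+$ entirely by exploiting the symmetric-function structure. With $q$ the largest prime factor of $a+1$ and $(\alpha_1,\beta_1)=(\alpha_p^{(a+1)/q},\beta_p^{(a+1)/q})$, the $\mathcal S$-unit factor is $F_{q-1}(\alpha_1,\beta_1)$ where $F_{q-1}(X,Y)=(X^q-Y^q)/(X-Y)$. Being symmetric of even degree, $F_{q-1}(X,Y)=H_{q-1}(S^2,P)$ for a homogeneous $H_{q-1}\in\mathbb Z[x,y]$ of degree $(q-1)/2$, evaluated at the \emph{rational integers} $S=\alpha_1+\beta_1$ and $P=\alpha_1\beta_1=p^{11(a+1)/q}$. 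For $q\ge 7$ the form $H_{q-1}$ is irreducible (its dehomogenisation has a root generating $\mathbb Q(\zeta_q)^+$), so $H_{q-1}(x,y)\in\mathbb Z_{\mathcal S}^*$ is a Thue--Mahler equation \emph{over $\mathbb Q$} of degree $(q-1)/2\le P$; Evertse's uniform bound then gives at most $(5\cdot 10^6(q-1)/2)^s<P^{8s}$ classes, and distinct primes $p\notin\mathcal S$ lie in distinct classes. The dependence on $q$ enters only through the base, not the exponent, so summing over $q\le P$ costs a single extra factor of $P$.

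For $q\in\{3,5\}$ the degree of $H_{q-1}$ drops below $3$ and Thue--Mahler is unavailable. Here the paper completes the square to rewrite the condition as $x^2-y^{11}\in\mathbb Z_{\mathcal S}^*$ (for $q=5$ after scaling by $4\cdot 5^{10}$, which is why $2$ and $5$ are adjoined to $\mathcal S$), reduces the right-hand side modulo $22$nd powers to at most $2\cdot 22^s$ equations $x'^2=y'^{11}+A'$, and invokes the Evertse--Silverman uniform bound $7^{11^2(4+9s)}h(\mathbb L)^2$ with $\mathbb L=\mathbb Q(A'^{1/11},\zeta_{11})$; this field has degree at most $110$ independently of $P$, and its class number is controlled via its discriminant. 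It is this hyperelliptic step for $q\in\{3,5\}$, not the Thue--Mahler step for $q\ge 7$, that produces the dominant contribution to the exponent $6500(s+4)$.
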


Taking $S=\{3,5,7\}$, we have $s=3,~P=7$ so equation \eqref{eq:Sunit} has at most $11^{45500}$ solutions of the form $n=p^a$ for an odd prime $p$ and a positive even integer $a$. We end up with a computational result showing that in fact there is no such solution. 

\begin{proposition}
\label{prop:5}
There is no $n>1$ such that $\tau(n)$ is odd and  $P(\tau(n))\le 10$. 
\end{proposition}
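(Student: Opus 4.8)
The plan is to turn Proposition \ref{prop:5} into a finite computation in three moves: reduce to prime powers by multiplicativity, bound the exponent by the primitive divisor theorem, and treat the finitely many remaining one-parameter families as $S$-unit equations made effective by lower bounds for linear forms in logarithms.

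First I would record the parity of $\tau$. It is even on even inputs (immediate from \eqref{eq:linrec} and $\tau(2)=-24$), while for odd $m$ one gets $\tau(m)\equiv\sigma(m)\pmod 2$ by reducing $q\prod_{k\ge1}(1-q^k)^{24}$ modulo $2$; hence $\tau(m)$ is odd precisely when $m$ is an odd perfect square. Therefore, if some $n>1$ had $\tau(n)$ odd with $P(\tau(n))\le 10$, then writing $n=\prod_i p_i^{a_i}$ with all $p_i$ odd and all $a_i$ even, using multiplicativity and Proposition \ref{prop:2} (so $|\tau(m)|>1$ for $m>1$), at least one factor $\tau(p_i^{a_i})$ would be an odd integer of absolute value $>1$ whose prime factors all lie in $\{3,5,7\}$. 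So it suffices to prove: for every odd prime $p$ and every even $a\ge2$, the integer $\tau(p^a)$ has a prime factor exceeding $10$.

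Fix such $p,a$ and set $m=a+1\ge3$. By \eqref{eq:linrec} and the displayed closed form, the numbers $u_m:=\tau(p^{m-1})=(\alpha_p^m-\beta_p^m)/(\alpha_p-\beta_p)$ form a Lehmer sequence attached to the pair with $\alpha_p\beta_p=p^{11}$ and $(\alpha_p+\beta_p)^2=\tau(p)^2$, and $\alpha_p/\beta_p$ is not a root of unity: the degenerate values $\tau(p)^2\in\{p^{11},2p^{11},3p^{11}\}$ are excluded by comparing $p$-adic, $2$-adic and $3$-adic valuations (the case $p=3$ by hand, since $\tau(3)=252\ne\pm3^6$), the value $4p^{11}$ by the strict inequality in Deligne's bound, and $\tau(p)=0$ directly, since then $\tau(p^{2k})=\pm p^{11k}$ has the prime factor $p>10$ when $p\ge11$ and does not occur for $p\in\{3,5,7\}$. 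By the Bilu--Hanrot--Voutier primitive divisor theorem, once $m$ exceeds a small absolute constant the term $u_m$ has a primitive prime divisor $\ell$, and any such $\ell$ satisfies $\ell\equiv\pm1\pmod m$, in particular $\ell\ge m-1>10$; the finitely many remaining values of $m$, together with the finitely many exceptional Lehmer pairs, are checked directly. Thus only finitely many exponents $a$ survive.

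For each surviving $a$, the integer $\tau(p^a)$ is a fixed homogeneous polynomial in $\tau(p)$ and $p^{11}$, and the equation $\tau(p^a)=\pm3^x5^y7^z$ becomes an $S$-unit equation: after disposing of $p\mid\tau(p)$ (then $p\mid\tau(p^a)$, so $P(\tau(p^a))\ge p>10$ unless $p\le7$) and of $p\in\{3,5,7\}$ by a short tabulation, one factors $\tau(p^a)$ over the relevant number field — for instance $\tau(p^4)=N_{\bbQ(\sqrt5)/\bbQ}(\tau(p)^2-p^{11}\varphi^2)$ with $\varphi=(1+\sqrt5)/2$ — so that a solution yields a two-term $S$-unit relation whose exponents $x,y,z$ and the prime $p$ are bounded explicitly by lower bounds for linear forms in complex and $\ell$-adic logarithms, exactly in the spirit of the proof of Proposition \ref{prop:6}. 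The main obstacle is not bounding $a$, which the primitive divisor theorem settles cleanly, but the enormous size of the bound on $p$ produced by linear forms in logarithms: one must compress it by LLL/continued-fraction reduction and by sieving $\tau(p^a)\bmod\ell$ against auxiliary primes $\ell$, down to a range in which one can run through all odd primes $p$ below the reduced bound and all $a$ in the surviving set and verify, by dividing out the primes $3,5,7$, that $\tau(p^a)$ is never reduced to $\pm1$. In particular the case $a=2$, namely $\tau(p)^2-p^{11}=\pm3^x5^y7^z$, is a genuine exponential Diophantine equation whose resolution needs the full apparatus followed by this final computation.
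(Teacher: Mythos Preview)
Your reduction to odd prime powers $p^a$ with $a$ even, and the use of primitive divisors to bound $a$, match the paper. One small wrinkle: when $p\mid\tau(p)$ the pair $(\alpha_p,\beta_p)$ fails the Lehmer coprimality condition $\gcd(\alpha_p\beta_p,(\alpha_p+\beta_p)^2)=1$, so BHV does not apply directly; the paper fixes this by writing $\tau(p^a)=p^{\lambda a}u_p(a+1)$ with $u_p$ a genuine Lucas sequence before invoking Bilu--Hanrot--Voutier, and you should dispose of $p\mid\tau(p)$ \emph{before} the primitive-divisor step rather than after. The paper also dispatches $a\in\{4,6\}$ more cheaply than your norm-form approach: any prime dividing $u_p(5)$ is $5$ or $\equiv\pm1\pmod 5$, hence $\ge11$, so $P(u_p(5))\le 7$ forces $u_p(5)=\pm 5^k$, i.e.\ no primitive divisor, and the BHV tables finish it; likewise for $u_p(7)$.

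The real divergence, and the place where your argument remains a sketch while the paper's is complete, is $a=2$. Your generic plan---reduce $\tau(p)^2-p^{11}=\pm 3^x5^y7^z$ to some thousands of genus-$5$ equations $X^2=Y^{11}+A'$ over $\mathbb Z_S$, bound by Baker, reduce by LLL, search---is valid in principle, but you do not carry it out, and in practice this is a heavy computation. The paper's decisive idea, which you are missing, is to exploit the classical congruences for $\tau$ itself: from $\tau(n)\equiv n\sigma_9(n)\pmod 7$ (for $n$ a quadratic residue mod $7$), $\tau(n)\equiv n^{-30}\sigma_{71}(n)\pmod{5^3}$, and $\tau(n)\equiv n^{-610}\sigma_{1231}(n)\pmod{3^6}$, a few lines give $7\nmid\tau(p^2)$, then $5\nmid\tau(p^2)$, and finally that the $3$-part is exactly $3^1$ with positive sign. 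This collapses everything to the single equation $x^2-3=y^{11}$, already solved in Barros' thesis (via Thue equations over $\mathbb Z[\sqrt3]$) with only $(x,y)=(\pm2,1)$. The $\tau$-specific congruences are precisely the shortcut that turns your open-ended computation into a finished proof.
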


This last result shows that in Proposition \ref{prop:6} the $\max\{P,11\}$ can be replaced by $P$. Indeed, either $P\ge 11$, in which case the maximum is $P$, or $P\le 10$, in which case there is no solution to equation \eqref{eq:Sunit} so the stated inequality holds anyway. 

Our paper is much inspired by \cite{MoSmi} where the authors have studied the set of positive integers $n$ such that $n\mid 
\tau(n)$ and on the way they derived congruences for the $\tau$-function such as $\tau(1000 n)\equiv 0\pmod {64000}$ which holds for all positive integers $n$. 

\section{Proofs}

\subsection{Proof of Lemma \ref{lem:1}}

Assume $\tau(n)=0$ for some $n$. Then there is a prime $p$ with $\tau(p)=0$. For $q=p$, we take $a_q=2$. Then $\tau(p^2)=(\tau(p))^2-p^{11}=-p^{11}$. Since $11$ is odd, $\tau(p^{11})$ is an integer multiple of
$\tau(p)$, so it is zero. Hence, $\tau(\tau(p^2))=0$. Assume next that $q\ne p$.  In  particular, we may assume that $\tau(q)\ne 0$. 
The sequence $\{\tau(q^a)\}_{a\ge 0}$ is the shift of a Lucas sequence. Namely, recall that by putting $\alpha_q,~\beta_q$ for the roots of the quadratic
\begin{equation}
\label{eq:quad}
x^2-\tau(q)x+q^{11},
\end{equation}
we have
$$
\tau(q^a)=\frac{\alpha_q^{a+1}-\beta_q^{a+1}}{\alpha_q-\beta_q}.
$$
Further, the ratio $\alpha_q/\beta_q$ is not a root of unity. Indeed, assume it is. Then since it is also a quadratic number, it follows that it is a root of unity of orders one of $1,2,3,4,6$. It is not possible that 
$\alpha_q/\beta_q=1$, since the discriminant $\tau(q)^2-4q^{11}$ of the quadratic \eqref{eq:quad} is negative so $\alpha_q$ and $\beta_q$ are complex non-real and they are conjugates. 
It is also not possible that $\alpha_q/\beta_q=-1$, since we have assumed that $\tau(q)=\alpha_q+\beta_q\ne 0$. Thus, the order is one of $3,4,6$. Since 
$$
\tau(q^a)=\frac{\alpha_q^{a+1}-\beta_q^{a+1}}{\alpha_q-\beta_q},
$$
it follows that $\tau(q^a)=0$ for some $a\in \{2,3,5\}$. However, 
\begin{eqnarray*}
\tau(q^2) & = & \frac{\alpha_q^3-\beta_q^3}{\alpha_q-\beta_q}=\alpha_q^2+\alpha_q\beta_q+\beta_q^2=\tau(q)^2-q^{11};\\
 \tau(q^3) & = & \frac{\alpha_q^4-\beta_q^4}{\alpha_q-\beta_q}=(\alpha_q+\beta_q)(\alpha_q^2+\beta_q^2)=\tau(q)(\tau(q)^2-2q^{11});\\
  \tau(q^5) & = & \frac{\alpha_q^6-\beta_q^6}{\alpha_q-\beta_q}=(\alpha_q+\beta_q)(\alpha_q^2+\alpha_q\beta_q+\beta_q^2)(\alpha_q^2-\alpha_q\beta_a+\beta_q^2)\\
 &=&\tau(q)(\tau(q)^2-q^{11})(\tau(q)^2-3q^{11}),
 \end{eqnarray*}
 so we see that if one of the above expressions is zero but $\tau(q)\ne 0$, it follows that one of $\tau(q)^2-bq^{11}$ is zero for some $b\in \{1,2,3\}$. This is impossible for $b=1$ and for $b\in \{2,3\}$ it implies that $q=b$. So, $q\in \{2,3\}$. However, one 
 checks that  $\alpha_2/\beta_2$ is not a root of unity of order $4$ and that $\alpha_3/\beta_3$ is not a root of unity of order $6$.     
 Hence, $\{\tau(q^a)\}_{a\ge 0}$ is {\it almost} a 
 Lucas sequence, except that it might be that $\tau(q)$ and $q^{11}$ are not coprime. Put $d:=\gcd(\tau(q),q^{11})>1$. Thus, $d=q^{\lambda}$. For $q=2$, we have $\tau(2)=-24$, so $d=8$ and $\lambda=3$.
 For $q=3$, we have $\tau(3)=252=2^2\cdot 3^2\cdot 7$, so $d=9$ and $\lambda=2$. 
 For $q\ge 5$, since $|\tau(q)|<2q^{11/2}$, it follows that $q^{\lambda}\le |\tau(q)|\le 2q^{11/2}<q^6$, so $\lambda\in \{1,2,3,4,5\}$. Then writing $(\gamma_q,\delta_q):=(\alpha_q/q^{\lambda},\beta_q/q^{\lambda})$, we have 
 $$
 \tau(q^a)=q^{\lambda a}\left(\frac{\gamma_q^{a+1}-\delta_q^{a+1}}{\lambda_q-\delta_q}\right).
 $$
 Further, $(\lambda_q,\delta_q)$ are the roots of the quadratic $x^2-(\tau(q)/q^{\lambda}) x+q^{11-2\lambda}$ and $\tau(q)/q^{\lambda}$ and $q^{11-2\lambda}$ are coprime and $\gamma_q/\delta_q=\alpha_q/\beta_q$ is not a root of unity. Thus, 
 $$
 \tau(q^a)=q^{\lambda a} u_q(a+1),
 $$
 where $\{u_q(m)\}_{m\ge 0}$ is the Lucas sequence of roots $(\gamma_q,\delta_q)$. Now let  again $p$ be such that $\tau(p)=0$.  Let $b_p$ be the order of appearance of $p$ in the sequence $\{u_q(m)\}_{m\ge 0}$. 
 This is the smallest positive integer $k$ such that $p\mid u_q(k)$, which  exists since $p$ and $q$ are coprime so the last coefficient of the characteristic equation for $\{u_q(m)\}_{m\ge 0}$ 
 which is $q^{11-2\lambda}$ is coprime to $p$. It is known that $b_p$ divides $p-e$, where $e={\displaystyle{\left(\frac{\tau(p)^2-4p^{11}}{q}\right)}}$ 
 and ${\displaystyle{\left(\frac{\bullet}{q}\right)}}$ is the Legendre symbol. Write 
 $$
 u_q(b_p)=p^{\nu_p} m_p,
 $$
 where $\nu_p\ge 1$ and $m_p$ is coprime to $p$. Let 
 $$
 c_p:=\left\{\begin{matrix} b_p & {\text{\rm if}}  & \nu_p\equiv 1\pmod 2; \\
 pb_p & {\text{\rm  if}} &  \nu_p\equiv 0\pmod 2.\end{matrix}\right.  
 $$
 Since 
 $$
 u_q(pb_p)=p^{\nu_p+1} m_p'
 $$
 for some integer $m_p'$ coprime to $p$, it follows that the exponent of $p$ in $u_q(c_p)$ is  exactly $\mu_p:=2\lfloor \nu_p/2\rfloor+1$ so it is odd. Now compute 
 $$
 \tau(q^{c_p-1})=q^{\lambda (c_p-1)} u_q(c_p)=q^{\lambda(c_p-1)} p^{\mu_p} M_p,
 $$
 where $M_p\in \{m_p,m_p'\}$ is coprime to $p$. Thus, by multiplicativity, 
 $$
 \tau(\tau(q^{c_p-1}))=\tau(p^{\mu_p})\tau(q^{\lambda(c_p-1)}|M_p|)
 $$ 
 and $\tau(p^{\mu_p})$ is a multiple of $\tau(p)$ since  
 $\mu_p$ is odd, so in particular $\tau(\tau(q^{c_p-1})=0$. This proves the lemma with $a_p:=c_p-1$. 
\qed

\subsection{Proof of Proposition \ref{prop:1}}

For a prime $p$ we put $\nu_p(m)$ for the exponent of $p$ in the factorization of $m$. We use the fact that $\nu_2(\tau(n))\ge 3\nu_2(n)$ (see~Lemma 2.1 in~\cite{LuMa}). Thus, if $\nu_2(n)\ge 1$, then $\nu_2(\tau(n))\ge 3$ and by induction on $k$ we get that $\nu_2(\tau^{(k-1)}(n))\ge 3^{k-1}$. Write $|\tau^{(k-1)}(n)|=2^a b$, where $a\ge 3^{k-1}$ and $b$ is odd. 
We look at the sequence $\{\tau(2^m)\}_{m\ge 0}$. With the notation from  the proof of Lemma \ref{lem:1}, 
we have $(\alpha_2,\beta_2)=4\left( -3+{\sqrt{-119}} , 3-{\sqrt{-119}}\right)$. Next, $d=\gcd(\tau(2),2^{11})=2^3$, so $(\gamma_2,\delta_2)=\left(-3+{\sqrt{-119}})/2,(-3-{\sqrt{-119}})/2\right)$. Further,
$$
\tau(2^m)=2^{3m} u_2(m+1),\quad {\text{\rm where}}\quad u_2(m+1)=\frac{\gamma_2^{m+1}-\delta_2^{m+1}}{\gamma_2-\delta_2}.
$$
So, for us, 
$$
\tau^{(k)}(n)=\tau(|\tau^{(k-1)}(n)|)=\tau(2^a b)=2^{3a} u_2(a+1)\tau(b),
$$
where $a+1\ge 3^{k-1}+1$. The sequence $\{u_2(m)\}_{m\ge 0}$ is a Lucas sequence. Thus, by a celebrated result of Bilu, Hanrot and Voutier \cite{BHV}, $u_2(m)$ has a primitive prime factor for $m\ge 31$. 
This is a prime $p$ which does not divide $u_2(\ell)$ for any positive integer $\ell<m$ and does not divide the discriminant $-119=-7\times 17$ either. This prime $p$ has the property that $p\equiv \pm 1\pmod m$. In particular, $p\ge m-1$. 
Applying this to our situation, we get that $u_2(a+1)$ is divisible by a prime $p\ge a\ge 3^{k-1}$ provided $a+1\ge 31$. This last inequality holds for $k\ge 5$ since $a\ge 3^{k-1}.$ For smaller values of $k$, we list 
$u_2(m)$ for all $m\in \{2,\ldots,30\}$ and check that $u_2(m)$ has a primitive prime factor for all such values of $m$ (note that $u_2(m)$ is odd for all $m\ge 1$). Thus, $\tau^{(k)}(n)$ is indeed divisible by a prime $p\ge 3^{k-1}$ for all 
$k\ge 1$. If $k\ge 3$, neither  $3^{k-1}$ nor $3^{k-1}+1$ can be primes so $p\ge 3^{k-1}+2$. Thus, it suffices to prove that the inequality $p\ge 3^{k-1}+2$ also holds for $k=1,2$. For $k=1,~2$, since $n$ is even (so, $a\ge 1$), it follows that the desired inequalities hold if $P(\tau(2^a))\ge 3$ for all $a\ge 1$ and $P(\tau(2^a))\ge 5$ for all $a\ge 3$. But these are equivalent to the fact that $P(u_2(a+1))\ge 3$ for all $a\ge 1$ and $P(u_2(a+1))\ge 5$ 
for all $a\ge 3$,  which are consequences of the fact that the odd number $u_2(m)$ has primitive divisors for all $m\ge 2$ together with the fact that $u_2(2)=-3$. 
\qed

\medskip

{\bf Remark.} Stewart \cite{Stew} showed that $P(u_2(n))>n\exp(\log n/(104\log\log n))$ holds once $n>n_0$. With this result, we get that in fact the inequality $P(\tau^{(k)}(n))>3^{k-1}\exp(k/(104\log k))$ holds 
under the same assumptions (that $n$ is even and that the Lehmer conjecture holds) once $k>k_0$ is sufficiently large. 

\subsection{Proof of Proposition \ref{prop:2}} Since $\tau(n)$ is odd, we may assume that $n$ is an odd square. By the multiplicative property of $\tau$, it follows that we can reduce the problem at a prime power $p^{a}\| n$. 
Then $\tau(p^a)=\pm 1$ and $a$ is even. If $a=2$, then $\tau(p^2)=\tau(p)^2-p^{11}$. Thus, we get $x^2-y^{11}=\pm 1$ with $(x,y):=(\tau(p),p)$, and this has no positive integer solutions $(x,y)$ since, from the solution 
of Catalan's problem by Mih\u ailescu \cite{Mih} we know that $3^2$ and $2^3$ are the only consecutive perfect powers. Thus, $a\ge 4$. Further, $p$ and $\tau(p)$ are coprime for if not then $p\mid \tau(p^a)$ for all $a\ge 1$, which is impossible. 
Thus, 
$$
\tau(p^a)=u_p(a+1)=\frac{\alpha_p^{a+1}-\beta_p^{a+1}}{\alpha_p-\beta_p}=\pm 1.
$$
Since $\tau(p)$ and $p$ are coprime it follows that the above equation signals $u_p(a+1)$ as a member of a Lucas sequence without primitive divisors. Note that $a+1\ge 5$ is odd. These are classified in 
Table 1 in \cite{BHV}. If $(\alpha_p,\beta_p)=\varepsilon((u+{\sqrt{v}})/2,(u-{\sqrt{v}})/2)$, where $\varepsilon\in \{\pm 1\}$ and $u_n=(\alpha_p^n-\beta_p^n)/(\alpha_p-\beta_p)$ does not have a primitive divisor for some 
$n\ge 5$, which is odd, then $n\in \{5,7,13\}$. Further, if $b<0$, then for $n=5$ we have $(a,b)\in \{(1,-7),~(2,-40),~(1,-11),~(1,-15),~(12,-76),~(12,-1364)\}$, while if $n=7,13$ then $(a,b)\in \{(1,-7),~(1,-19)\}$.
However, for us, $a=\pm \tau(p)$ and $b=\tau(p)^2-4p^{11}$. Thus, we get a certain number of equations for $p^{11}$ which we check that they have no convenient solution. 
For example, $(a,b)=(1,-7)$ gives 
$\tau(p)=\pm 1$, $-7=\tau(p)^2-4p^{11}=1-4p^{11}$, so $4p^{11}=8$, a contradiction.  
\qed

\subsection{Proof of Proposition \ref{prop:3}}

We start by analysing $\tau(p^a)$ for odd primes $p$ and even exponents $a$. It turns out that if $a$ is sufficiently large, then $|\tau(p^a)|>p^{2a}$.

\begin{lemma}
\label{lem:2}
If $a>10^{16}$ is even, then $|\tau(p^a)|>p^{2a}$.
\end{lemma}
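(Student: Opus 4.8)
The plan is to use the closed form $\tau(p^a)=u_p(a+1)$ (valid since for $p$ odd, $\gcd(\tau(p),p)=1$, as noted in the proof of Proposition~\ref{prop:2}), where $\{u_p(m)\}_{m\ge 0}$ is the Lucas sequence with roots $\alpha_p,\beta_p$, the complex-conjugate roots of $x^2-\tau(p)x+p^{11}$. Writing $\alpha_p=\sqrt{p^{11}}\,e^{i\theta_p}$ with $\cos\theta_p=\tau(p)/(2p^{11/2})\in(-1,1)$ (the Sato--Tate angle), we get the exact formula
\begin{equation}
\label{eq:trig}
|u_p(m)|=\frac{p^{11(m-1)/2}\,|\sin(m\theta_p)|}{|\sin\theta_p|}.
\end{equation}
So $|\tau(p^a)|=p^{11a/2}\cdot|\sin((a+1)\theta_p)|/|\sin\theta_p|$, and the claim $|\tau(p^a)|>p^{2a}$ reduces to showing $|\sin((a+1)\theta_p)|/|\sin\theta_p|>p^{2a-11a/2}=p^{-7a/2}$, i.e. that the "small-denominator" factor cannot decay faster than $p^{-7a/2}$ when $a$ is large.

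First I would handle the generic case: $|\sin((a+1)\theta_p)|$ is bounded below except when $(a+1)\theta_p$ is very close to an integer multiple of $\pi$, i.e. when $\theta_p/\pi$ has an unusually good rational approximation with denominator near $a$. If $\theta_p/\pi$ is irrational (which holds for all $p$, since a rational multiple of $\pi$ that is an algebraic number of degree $2$ over $\bbQ$ forces $\alpha_p/\beta_p$ to be a root of unity — excluded exactly as in the proof of Lemma~\ref{lem:1} for all odd $p$), one still needs an effective lower bound. The key tool is a lower bound for linear forms in two logarithms (Baker's method, e.g. the Laurent--Mignotte--Nesterenko estimates), applied to $\Lambda=(a+1)\log\alpha_p-(a+1)\log\beta_p-m\cdot 2\pi i$ or, more cleanly, to $|\alpha_p^{a+1}-\beta_p^{a+1}|=|\alpha_p|^{a+1}|1-(\beta_p/\alpha_p)^{a+1}|$ with $\beta_p/\alpha_p=e^{-2i\theta_p}$ a non-torsion algebraic number of height $\ll\log p$. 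Such a bound gives
$$
|1-(\beta_p/\alpha_p)^{a+1}|>\exp\bigl(-C(\log p)(\log a)\bigr)
$$
for an absolute constant $C$, hence $|u_p(a+1)|>p^{11(a+1)/2}\exp(-C(\log p)(\log a))/|\sin\theta_p|$, and since $|\sin\theta_p|^{-1}\le 2p^{11/2}/|\tau(p)|\le 2p^{11/2}$, this beats $p^{2a}$ as soon as $(7a/2)\log p>C(\log p)(\log a)+O(\log p)$, i.e. as soon as $a> c_0\log a$ — which is automatic for $a$ exceeding any fixed bound, in particular for $a>10^{16}$. Note the constant $C$ here is absolute, not depending on $p$, which is exactly what makes the single threshold $10^{16}$ work uniformly in $p$.

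The main obstacle is getting a \emph{uniform-in-$p$} lower bound with the right shape: the linear-forms-in-logarithms estimate must have its dependence on $p$ sitting only inside the height term $\log p$ (so it gets absorbed by the $\log p$ coming from $p^{7a/2}$ on the other side), and its dependence on $a$ only logarithmic. I would cite the explicit two-logarithm bound of Laurent--Mignotte--Nesterenko (or Matveev specialized to $n=2$) and carefully track that the height of $\beta_p/\alpha_p$ is $\tfrac12\log p^{11}=\tfrac{11}{2}\log p$ and that $\log|\alpha_p|=\tfrac{11}{2}\log p$, so all $p$-dependence is genuinely proportional to $\log p$. A secondary technical point is the small primes $p=3$ (and to a lesser extent verifying no degenerate behaviour): one checks directly, as in Lemma~\ref{lem:1}, that $\alpha_p/\beta_p$ is never a root of unity for odd $p$, so the linear form is never zero and the Baker bound genuinely applies. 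Once the constant $C$ is pinned down, comparing $7a/2$ against $C\log a$ shows that a threshold far below $10^{16}$ already suffices, so the stated bound holds with room to spare.
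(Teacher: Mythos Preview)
Your approach is essentially the same as the paper's: both bound $|(\alpha_p/\beta_p)^{a+1}-1|$ from below via a linear-forms-in-two-logarithms estimate (the paper cites Corollary~4.2 of \cite{BiLu}, yielding $\log|\eta^{a+1}-1|>-10^{12}D^4(h(\eta)+1)\log(a+2)$ with $D=2$ and $h(\eta)\le \tfrac{11}{2}\log p$), and then compare the resulting loss of size $p^{-O(\log a)}$ against the available margin $p^{7a/2}$.

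Two minor slips to clean up. First, it is \emph{not} true that $\gcd(\tau(p),p)=1$ for all odd $p$: for instance $3\mid\tau(3)=252$, $5\mid\tau(5)=4830$, $7\mid\tau(7)$. The passage in the proof of Proposition~\ref{prop:2} you invoke was conditional on $\tau(p^a)=\pm1$. This does not damage your argument, since the identity $\tau(p^a)=(\alpha_p^{a+1}-\beta_p^{a+1})/(\alpha_p-\beta_p)$ and your formula~\eqref{eq:trig} hold regardless of that gcd; the paper simply treats the case $p^\lambda\|\tau(p)$ with $\lambda\ge 2$ separately (and trivially, since then $p^{2a}\mid\tau(p^a)$). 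Second, at the end you upper-bound $|\sin\theta_p|^{-1}$ by $2p^{11/2}$, but an upper bound here goes the wrong way for a lower bound on $|u_p(a+1)|$; what you actually need (and have for free) is $|\sin\theta_p|^{-1}\ge 1$. Neither point affects the validity of the strategy.
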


\begin{proof}
We look again at the sequence $\{\tau(p^a)\}_{a\ge 0}$. Let $d:=\gcd(\tau(p),p^{11})$. In the proof of Lemma \ref{lem:1}, we saw that if $d>1$, then $d=p^{\lambda}$ for some $\lambda\in \{1,2,3,4,5\}$. 
Then it is easy to prove by induction on $a$, via the recurrence formula 
$$
\tau(p^{a+2})=\tau(p)\tau(p^{a+1})-p^{11}\tau(p^a),\quad {\text{\rm valid~for~all}}\quad a\ge 0,
$$
that $p^{\lambda a}\mid \tau(p^a)$. Thus, if $\lambda\ge 3$, then $|\tau(p^{a})|\ge p^{3a}>p^{2a}$ holds for all positive integers $a$. If $\lambda=2$, then $p^{2a}\mid \tau(p^a)$ for all $a\ge 1$. Further, putting 
$(\gamma_p,\delta_p):=(\alpha_p/p^{2},\beta_{p}/p^{2})$ we get 
$$
\tau(p^a)=p^{2a} u_{p}(a+1)\quad {\text{\rm for~all}}\quad a\ge 1,
$$
where $u_{p}$ is the Lucas sequence as in Lemma~\ref{lem:1}.
Thus, $|\tau(p^a)|=p^{2a}$ leads to $u_{p}(a+1)=\pm 1$. If $a=2$, we get 
$$\pm 1=u_p(3)=\gamma_p^2+\gamma_p\delta_p+\delta_p^2=(\gamma_p+\delta_p)^2-\gamma_p\delta_p=(\tau(p)/p^2)^2-p^{7},
$$
which leads to the integer solution $(x,y):=(\tau(p)/p^2,p)$ to the Diophantine equation 
$$
x^2-y^7=\pm 1,
$$
which does not exist by Mih\u ailescu's result \cite{Mih}. If $a\ge 4$, we get again that 
$u_p(a+1)$ is a member of a Lucas sequence without primitive divisors and an investigation of Table 1 in \cite{BHV}, as in the proof of Proposition \ref{prop:2}, does not lead to any solutions. 
Thus, we may assume that $\lambda\in \{0,1\}$. Write again $(\gamma_p,\delta_p):=(\alpha_p/p^{\lambda},\beta_p/p^{\lambda})$. Then we have
$$
\tau(p^a)=p^{\lambda a} u_p(a+1)=p^{\lambda a}\left(\frac{\gamma_p^{a+1}-\delta_p^{a+1}}{\gamma_p-\delta_p}\right).
$$
Thus,
$$
\log |\tau(p^{a})|=\lambda a \log p+(a+1)\log|\delta_p|+\log|(\gamma_p/\delta_p)^{a+1}-1|-\log|\gamma_p-\delta_p|.
$$
We need a lower bound for $\log|\tau(p^a)|$. Since $|\gamma_p-\delta_p|\le 2|\delta_p|=2p^{11/2-\lambda}$, we get
$$
\log |\tau(p^a)|\ge \lambda a\log p+a\log |\delta_p|-\log 2+\log|(\gamma_p/\delta_p)^{a+1}-1|\ge 5.5a\log p-\log 2+\log|\eta^{a+1}-1|,
$$
where $\eta:=\gamma_p/\delta_p$. We need a lower bound for $|\eta^{a+1}-1|$. Corollary 4.2 in \cite{BiLu} shows that if we write $D$ and $h(\eta)$ for the degree and logarithmic height of $\eta$, respectively, then 
the inequality 
$$
\log |\eta^{a+1}-1|>-10^{12} D^4(h(\eta)+1)\log(a+2)
$$
holds (since $|\eta|=1$). A better (sharper) inequality can be found in Lemma 5 in \cite{Vou}. For us, $D=2$ and the minimal polynomial of $\gamma_p/\delta_p$ is 
$$
\gamma_p\delta_p(x-\gamma_p/\delta_p)(x-\delta_p/\gamma_p)=(\gamma_p\delta_p)x^2-(\gamma_p^2+\delta_p^2)x+(\gamma_p\delta_p),
$$
and both $\eta$ and its conjugate  ${\overline{\eta}}=\eta^{-1}$ have absolute value $1$, so 
$$h(\eta)=\frac{1}{2}\log(\gamma_p\delta_p)=\left(\frac{11-2\lambda}{2}\right)\log p\le 5.5\log p.$$ Thus, 
$$
\log|\eta^{a+1}-1|\ge -10^{12}\times 2^4 (5.5\log p+1)\log(a+2)\ge -6.5\times 2^4\times 10^{12} \log p \log(a+2).
$$
Hence,
$$
\log |\tau(p^a)|\ge \log p\left(5.5a-6.5\times 2^4\times 10^{12}\log(a+2)-\frac{\log 2}{\log p}\right)>2a\log p,
$$
where the last inequality holds for all $a>10^{16}$.  
\end{proof}

So, we got that $|\tau(p^a)|>p^{2a}$ holds for all primes $p\ge 3$ and all $a>10^{16}$. Here, we did not need the {\it abc}-conjecture. We use the {\it abc}-conjecture to deal with the low range $a\in [2,10^{16}]$.  
Let us recall the $abc$-conjecture. For a nonzero integer $m$ let
$$
N(m):=\prod_{p\mid m} p,
$$
be the {\it algebraic radical} of $m$.

\begin{conjecture}
For every $\varepsilon>0$, there exists a constant $C:=C_{\varepsilon}$ such that for all nonzero coprime integers $a,b,c$ with $a+b=c$, we have 
$$
\max\{|a|,|b|,|c|\}<C_{\varepsilon} N(abc)^{1+\varepsilon}.
$$ 
\end{conjecture}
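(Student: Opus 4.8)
The statement to be established is the \emph{abc}-conjecture of Masser and Oesterl\'e, one of the deepest open problems in Diophantine number theory. The honest plan, therefore, is not to attempt a proof but to record that \emph{none is available}, and to make clear why the inequality is stated as a \emph{conjecture} rather than a theorem. In this paper the inequality is never proved: it is invoked only as a standing hypothesis, exactly as in the statement of Proposition~\ref{prop:3}, where ``assume the \emph{abc}-conjecture'' signals that the conclusion $|\tau(n)|>n$ for large perfect squares $n$ is conditional. The role of the display is thus to fix notation --- the radical $N(m)$, the coprimality of $a,b,c$, the relation $a+b=c$, and the quantifier structure of $C_\varepsilon$ --- so that it can be applied cleanly to the low range $a\in[2,10^{16}]$ left open by Lemma~\ref{lem:2}.

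Were one to attempt a genuine proof, the natural entry point would be the theory of linear forms in logarithms, which already produces effective lower bounds for the radical $N(abc)$. The state of the art is the bound of Stewart and Yu, which has the shape
$$
\max\{|a|,|b|,|c|\}<\exp\bigl(\kappa\, N(abc)^{1/3}(\log N(abc))^{3}\bigr)
$$
for an absolute constant $\kappa$. The first step of any program would have to push such estimates from an exponential dependence on a fractional power of the radical toward the conjectured \emph{polynomial} dependence $N(abc)^{1+\varepsilon}$. No reduction of this kind is known, and the gap between these two regimes is precisely the content of the conjecture.

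The main obstacle is therefore structural rather than technical: the conjecture asserts a polynomial bound in a setting where all unconditional methods --- Baker-type estimates, the geometry of numbers, and modular or Frey-curve constructions --- yield only exponential savings, with no known mechanism to close the gap. (Mochizuki's inter-universal Teichm\"uller theory has been advanced as a proof, but it is not accepted by the wider community, so it cannot be cited as an established result.) For these reasons no proof plan can honestly be offered, and the statement must remain a conjecture; every result in this paper that depends on it, notably Proposition~\ref{prop:3}, is correspondingly conditional.
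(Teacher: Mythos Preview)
Your assessment is correct and matches the paper's treatment: the displayed statement is the \emph{abc}-conjecture, which the paper records solely as a standing hypothesis (used in the proof of Proposition~\ref{prop:3} and in Theorem~\ref{thm:Gra}) and never attempts to prove. There is nothing further to compare; the paper, like your proposal, offers no proof because none exists.
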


In fact, we will use the following consequence of it which is Theorem 5 in \cite{Gra}.

\begin{theorem}
\label{thm:Gra}
Assume that the $abc$-conjecture is true. Suppose that $f(x,y)\in {\mathbb Z}[x,y]$ is homogeneous without repeated factors. Fix $\varepsilon>0$. Then for any coprime integers $n,m$ 
$$
N(f(m,n))>c_{f,\varepsilon}\max\{|m|,|n|\}^{{\text{\rm deg}}(f)-2-\varepsilon}.
$$
The constant $c_{f,\varepsilon}$ depends on $f$ and $\varepsilon$. 
\end{theorem}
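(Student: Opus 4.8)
The plan is to deduce this as a truncated ``second main theorem'' for the $d:=\deg(f)$ distinct points that $f$ cuts out on the projective line, following the route by which the $abc$-conjecture is known to force such radical lower bounds (Granville \cite{Gra}, building on Elkies' derivation of Vojta-type inequalities from $abc$). First I would dispose of the trivial range: for $d\le 2$ the exponent $d-2-\varepsilon$ is nonpositive and $N(f(m,n))\ge 1$ whenever $f(m,n)\ne 0$, so the bound holds with $c_{f,\varepsilon}=1$; since $f$ has no repeated factors and $\gcd(m,n)=1$, only finitely many coprime pairs give $f(m,n)=0$, and these are excluded. Dividing out the content (absorbed into the constant) I may take $f$ primitive and $d\ge 3$, and I write $H:=\max\{|m|,|n|\}$, so the target becomes $N(f(m,n))\gg_{f,\varepsilon}H^{d-2-\varepsilon}$.

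The heart of the matter is to produce, for each coprime pair $(m,n)$, an $abc$-triple that simultaneously records \emph{all} $d$ roots of $f$. Passing to the splitting field $K$, write $f(x,y)=a\prod_{i=1}^{d}(x-\theta_i y)$ with the $\theta_i$ pairwise distinct (this is exactly where ``no repeated factors'' is used), and set $L_i:=m-\theta_i n$. The naive move is to feed the linear relation $(\theta_j-\theta_k)L_i+(\theta_k-\theta_i)L_j+(\theta_i-\theta_j)L_k=0$ among any three factors into the $abc$-conjecture; but this controls only three factors at a time, and an optimal weighting of the resulting $\binom{d}{3}$ inequalities can be shown to cap at the exponent $d/3$, already too weak for $d\ge 4$. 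The sharp exponent instead demands a single $abc$-triple built from forms of higher degree: one seeks $A,B,C$ with $A+B=C$ whose product has the same radical as $f$ up to a factor $O_f(1)$, while $\max\{|A(m,n)|,|B(m,n)|,|C(m,n)|\}\gg_f H^{d-2}$. The toy form $f=x^3-xy^2=x(x-y)(x+y)$ already displays the mechanism: the identity $(m^2-n^2)+n^2=m^2$ gives a coprime triple of size $\asymp H^2$, and $abc$ returns $N(f(m,n))\gg H^{1-\varepsilon}$; the extra variable $n$ appearing inside the radical is precisely what turns the one-variable exponent $d-1$ into $d-2$. Throughout, the reduction ``radical of the product $\approx N(f(m,n))$'' rests on the fact that a prime dividing two factors $L_i,L_j$ must divide the resultant $\theta_i-\theta_j$, hence divide $\mathrm{disc}(f)$, a fixed finite set contributing only to $c_{f,\varepsilon}$.

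The remaining and genuinely hard step is the construction of such a higher-degree $abc$-triple for an \emph{arbitrary} squarefree $f$, where no convenient factorization is at hand; I would obtain it by the Elkies--Belyi method, composing with a suitable covering of $\mathbb{P}^1$ so that the general configuration of $d$ points is pulled back to one where the $abc$-conjecture applies directly, and then tracking heights and conductors through the covering. I expect this construction, together with the attendant number-field bookkeeping (reconciling the stated $abc$-conjecture over $\bbQ$ with the conductor and height over $K$, with $\mathrm{disc}(K)$ entering only through the constant), to be the main obstacle; it is carried out in Granville's Theorem~5 \cite{Gra}, whose argument I would follow, recording the dependence of $c_{f,\varepsilon}$ on $\mathrm{disc}(f)$, the roots $\theta_i$, and $\varepsilon$. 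Conceptually the loss of exactly $2$ in the exponent is the Euler characteristic $2-2g=2$ of the projective line ($g=0$), which is why no method that ignores the global geometry of the $d$ points can recover it.
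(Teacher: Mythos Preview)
The paper does not prove this statement at all: it is quoted verbatim as Theorem~5 of Granville~\cite{Gra} and used as a black box in the proof of Proposition~\ref{prop:3}. There is therefore no ``paper's own proof'' against which to compare your proposal.

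That said, your sketch is a fair outline of the Elkies--Granville argument itself, and your identification of the key difficulty is accurate: naive application of $abc$ to triples of linear factors $L_i$ tops out well below the exponent $d-2$, and the sharp bound genuinely requires the Belyi-map construction to manufacture an $abc$-triple of the correct height whose radical is controlled by $N(f(m,n))$. Your toy example and the resultant/discriminant remark are the right supporting observations. The one point where your write-up becomes circular is the final paragraph, where you say the hard construction ``is carried out in Granville's Theorem~5, whose argument I would follow''---since Theorem~\ref{thm:Gra} \emph{is} Granville's Theorem~5, this amounts to citing the result you are meant to prove. If you intend to supply an actual proof rather than a pointer, you would need to spell out the Belyi covering (e.g., via Elkies' iterated-square construction or an explicit map ramified only over $0,1,\infty$ with the $\theta_i$ in the fiber), verify the height comparison $H(\phi(m:n))\asymp H^{\deg\phi}$, and check that the conductor of the resulting $abc$-triple over $\mathbb{Q}$ is bounded by $N(f(m,n))$ times a constant depending only on $f$.
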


We next prove the following statement.

\begin{lemma}
The $abc$-conjecture implies that for all even $a\in [2,10^{16}]$ except for $a=6$, the inequality $|\tau(p^a)|>p^{2a}$ holds for $p>P_a$ sufficiently large. For $a=6$, the $abc$-conjecture implies that 
$|\tau(p^6)|>p^9$ holds for all $p>P_6$ sufficiently large.  
\end{lemma}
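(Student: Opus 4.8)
The plan is to exhibit $\tau(p^a)$, for each fixed even $a$, as the value at $(\tau(p)^2,p^{11})$ of a homogeneous integer binary form without repeated factors, and then to invoke Theorem~\ref{thm:Gra}; when that form has degree $\le 2$, one falls back on the $abc$-conjecture directly. First I would record the factorization. Write $\alpha=\alpha_p$, $\beta=\beta_p$ for the roots of $x^2-\tau(p)x+p^{11}$, so $\alpha+\beta=\tau(p)$, $\alpha\beta=p^{11}$ and $\tau(p^a)=u_p(a+1)=(\alpha^{a+1}-\beta^{a+1})/(\alpha-\beta)$, and put $n:=a+1$, which is odd and $\ge3$. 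From $x^n-1=\prod_{d\mid n}\Phi_d(x)$ we get $u_p(n)=\prod_{d\mid n,\,d>1}\Phi_d^{\mathrm{hom}}(\alpha,\beta)$, with $\Phi_d^{\mathrm{hom}}(\alpha,\beta)=\prod_{\gcd(k,d)=1}(\alpha-\zeta_d^{k}\beta)$ and $\zeta_d=e^{2\pi i/d}$. Since $d>1$ is odd the factors pair up under $k\mapsto d-k$, and $(\alpha-\zeta_d^{k}\beta)(\alpha-\zeta_d^{-k}\beta)=\tau(p)^2-(2+2\cos\frac{2\pi k}{d})p^{11}$; hence, with $R_d$ a set of representatives of $(\mathbb{Z}/d\mathbb{Z})^{\ast}/\{\pm1\}$ and $c_{d,k}:=2+2\cos\frac{2\pi k}{d}$,
\[
\tau(p^a)=\prod_{\substack{d\mid a+1\\ d>1}}\ \prod_{k\in R_d}\bigl(\tau(p)^2-c_{d,k}\,p^{11}\bigr)=G_a\bigl(\tau(p)^2,p^{11}\bigr),\qquad G_a(X,Y):=\prod_{\substack{d\mid a+1\\ d>1}}\prod_{k\in R_d}\bigl(X-c_{d,k}Y\bigr).
\]
I would then verify that $G_a\in\mathbb{Z}[X,Y]$ (each inner product is a norm form over $\mathbb{Q}(2\cos\frac{2\pi}{d})$, hence has rational integer coefficients), that $G_a$ is homogeneous of degree $\tfrac12\sum_{d\mid a+1,\,d>1}\phi(d)=\tfrac12(a+1-1)=a/2$, and -- the key point for Theorem~\ref{thm:Gra} -- that $G_a$ has no repeated factor, the $c_{d,k}$ being nonzero and pairwise distinct as $d$ runs over the divisors $>1$ of $a+1$ and $k$ over $R_d$.

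Before applying Theorem~\ref{thm:Gra} I would dispose of the primes dividing $\tau(p)$, which are the only obstruction to $\tau(p)^2$ and $p^{11}$ being coprime. Exactly as in the proof of Lemma~\ref{lem:2} (the relevant part of which does not need $a$ large), if $\nu_p(\tau(p))\ge2$ then already $|\tau(p^a)|>p^{2a}$, via $p^{\lambda a}\mid\tau(p^a)$ together with Mih\u ailescu's theorem when $a=2$ and Table~1 of \cite{BHV} when $a\ge4$; and if $\nu_p(\tau(p))=1$ one runs the argument below with $((\tau(p)/p)^2,p^9)$ in place of $(\tau(p)^2,p^{11})$ and a factor $p^a$ in front, which only improves the exponent. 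So assume $p\nmid\tau(p)$. For even $a\ge8$ we have $\deg G_a=a/2\ge4$, so Theorem~\ref{thm:Gra} with $f=G_a$ at the coprime pair $(\tau(p)^2,p^{11})$ and $\varepsilon=\tfrac14$ gives
\[
|\tau(p^a)|=\bigl|G_a(\tau(p)^2,p^{11})\bigr|\ \ge\ N\bigl(G_a(\tau(p)^2,p^{11})\bigr)\ \gg_{a}\ \max\{\tau(p)^2,p^{11}\}^{\,a/2-9/4}\ \ge\ p^{\,11a/2-99/4},
\]
because $\tau(p)^2<4p^{11}$ forces the maximum to be at least $p^{11}$. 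Since $11a/2-99/4>2a$ for all even $a\ge8$, taking $p>P_a$ large enough to absorb the implied constant gives $|\tau(p^a)|>p^{2a}$.

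It remains to treat $a\in\{2,4,6\}$. For $a=6$ the same computation with $\deg G_6=3$ only yields $|\tau(p^6)|\gg_{6}p^{\,11-11\varepsilon}$, which for $\varepsilon<2/11$ exceeds $p^9$ -- but not, in general, $p^{12}$ -- and this is precisely the asserted exceptional bound; it is also all that is needed in Proposition~\ref{prop:3}, since $p^9=(p^6)^{3/2}$. For $a\in\{2,4\}$ the form $G_a$ has degree $\le2$ and Theorem~\ref{thm:Gra} is vacuous, so instead I would apply the $abc$-conjecture directly to a coprime ternary identity: to $\tau(p)^2=p^{11}+\tau(p^2)$ when $a=2$, giving $|\tau(p^2)|\gg p^{\,9/2-\varepsilon}>p^4$, and to $4\,\tau(p^4)=(2\tau(p)^2-3p^{11})^2-5\,p^{22}$ when $a=4$, giving $|\tau(p^4)|\gg p^{\,10-\varepsilon}>p^8$; in both cases coprimality is restored after removing bounded powers of $2$ and $5$, and $p>P_a$ absorbs the constant. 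The nonvanishing of $\tau(p^2)$ and $\tau(p^4)$ is where the Lehmer conjecture is used.

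The main obstacle, I expect, is the opening step: seeing that $\tau(p^a)$, although inhomogeneous as a polynomial in $\tau(p)$ and $p$, is homogeneous of degree $a/2$ in the pair $(\tau(p)^2,p^{11})$, and confirming squarefreeness so that Theorem~\ref{thm:Gra} can be applied. Everything afterwards is dictated by that degree: Granville's saving is $\deg G_a-2=a/2-2$, which, applied to a base of size at least $p^{11}$, beats the target $p^{2a}$ exactly when $11a/2-22>2a$, i.e. $a>44/7$. Thus $a\ge8$ works, $a=6$ falls just short and is pushed down to $p^9$, and $a=2,4$ sit below the range in which Theorem~\ref{thm:Gra} carries content and must be handled by an ad hoc appeal to $abc$.
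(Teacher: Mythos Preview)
Your proof is correct and follows essentially the same route as the paper: direct $abc$ for $a\in\{2,4\}$, and Theorem~\ref{thm:Gra} applied to the homogeneous form $H_a(x,y)=G_a(x,y)$ of degree $a/2$ for $a\ge6$, with the shortfall at $a=6$ yielding only the $p^9$ bound. Your explicit cyclotomic factorisation showing that the linear factors $X-c_{d,k}Y$ are pairwise distinct is a clean way to verify the ``no repeated factors'' hypothesis of Theorem~\ref{thm:Gra}, which the paper leaves implicit.
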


\begin{proof}
We start with $a=2$. Then $\tau(p^a)=\tau(p)^2-p^{11}=p^{2\lambda}(x_1^2-p^{11-2\lambda})$, where we put again $p^\lambda=\gcd(\tau(p),p^{11})$, and  $x_1:=\tau(p)/p^{\lambda}$. Recall that we only consider the case $\lambda\in \{0,1\}$. 
If $|\tau(p^2)|\le p^4$, we then get that $|x_1^2-p^{11-2\lambda}|\le p^{4-2\lambda}$. Consider the $abc$-equation $a+b=c$, where $a:=x_1^2,~b:=-p^{11-2\lambda}$. Then $|c|\le p^{4-\lambda}$, $a,~b,~c$ are coprime  and 
$\max\{|a|,|b|,|c|\}=|b|=p^{11-2\lambda}$. We get
$$
p^{11-2\lambda}\ll_{\varepsilon} N(abc)^{1+\varepsilon}\ll_{\varepsilon} (|x_1|p^{4-2\lambda})^{1+\varepsilon}\ll_{\varepsilon} (2p^{10.5-3\lambda})^{1+\varepsilon},
$$
where we used the fact that $|x_1|=|\tau(p)|/p^{\lambda}\le 2p^{5.5-\lambda}$. Choosing $\varepsilon:=0.01$, we get that $p\ll 1$. Thus, $p\le P_2$ is bounded for $a=2$. 

Assume next that $a\in [4,10^{16}]$ is even. Then 
$$
|\tau(p^a)|=p^{a\lambda} u_{p}(a+1)=p^{\lambda a} F_a(\gamma_p,\delta_p),
$$
where 
$$
F_a(X,Y)=\frac{X^{a+1}-Y^{a+1}}{X-Y}=X^a+X^{a-1}Y+\cdots+XY^{a-1}+Y^a.
$$
The polynomial $F_a(X,Y)$ is symmetric in $X$ and $Y$ so it is of the form $G_a(S,P)$ for some polynomial $G_a\in {\mathbb Z}[x,y]$, where we put $S:=X+Y,~P:=XY$. In addition, as a polynomial in $S$ it is concentrated only in even monomials. This can be seen by simultaneously changing the signs of $X$ and $Y$ (so, replacing $(X,Y)$ by $(-X,-Y)$). This does not change $F_a(X,Y)$ since $a$ is even and does not change $P$ but changes the sign of $S$. Thus, $G_a(S,P)=G_a(-S,P)$ so $G_a(S,P)=H_a(S^2,P)$ for some polynomial $H_a(x,y)\in {\mathbb Z}[x,y]$. The polynomial $H_a$ is homogenous of degree $a/2$. Let $a=4$. Then one checks that
$$
H_4(S^2,P)=S^4-3PS^2+P^2=(S^2-3P/2)^2-(5/4)P^2.
$$
So, assume that $|\tau(p^4)|\le p^8$. 
Then putting again $x_1:=\tau(p)/p^{\lambda}$, we get
$$
|\tau(p^4)|=p^{4\lambda} |F_4(\gamma_p,\delta_p)|=p^{4\lambda} |H_{4}(x_1^2,p^{11-2\lambda})|=p^{4\lambda}|(x_1^2-3p^{11-2\lambda}/2)^2-(5/4)p^{2(11-2\lambda)}|.
$$
We thus get that
$$
|(2x_1^2-3p^{11-2\lambda})^2-5 p^{2(11-2\lambda)}|\le 4p^{8-4\lambda}.
$$
We apply the $abc$-conjecture to the equation $a+b=c$, where $a:=(2x_1^2-3p^{11-2\lambda})^2$ and $b:=\textcolor{red}{-}5p^{2(11-2\lambda)}$. The greatest common divisor $D$ of these two numbers is either $1$  or $5$ because $p$ does not divide $a$ since $p$ does not divide $x_1$. Thus, applying the $abc$--conjecture to the equation $a_1+b_1=c_1$, where $a_1:=a/D,~b_1:=b/D$, we get
$$
p^{2(11-2\lambda)}\le \max\{|a_1|,|b_1|,|c_1|\}\ll_{\varepsilon} N(abc)^{1+\varepsilon}\le (11p^{11-2\lambda} \times (5p)\times (4p^{8-\lambda}))^{1+\varepsilon}\ll p^{(20-3\lambda)(1+\varepsilon)}
$$
and taking $\varepsilon:=0.01$, we get again that $p\le P_4$ (both when $\lambda=0$ and $\lambda=1$).

Now assume that $a\ge 6$. Then $H_a(x,y)$ has degree $a/2\ge 3$. We apply Theorem \ref{thm:Gra} to it to infer that
$$
|\tau(p^a)|=p^{\lambda a} |F_a(\gamma_p,\delta_p)|=p^{\lambda a} |H_a(x_1^2,p^{11-2\lambda}|\gg_{a,\varepsilon} p^{\lambda a} (p^{11-2\lambda})^{a/2-2-\varepsilon}\gg_{a,\varepsilon} p^{5.5a-11(2+\varepsilon)+2\lambda(2+\varepsilon)}.
$$
We want that $|\tau(p^a)|>p^{2a}$. This will be so if
$$
c_{H_a,\varepsilon} p^{5.5a-11(2+\varepsilon)+2\lambda(1+\varepsilon)}>p^{2a},
$$
where $c_{H_a,\varepsilon}$ is the constant that comes from Theorem \ref{thm:Gra}.   This works for $a\ge 8$ since we can take $\varepsilon:=0.01$, and we see that it is enough that 
$$
p>c_{H_a,0.01}^{-1/(3.5a-(2.01\times 11)}:=P_a,
$$
and the denominator of the exponent of $c_{H_a,0.01}$ is positive for $a\ge 8$. It also works for $a=6$ and $\lambda=1$, but it fails for $a=6$ and $\lambda=0$, since then $3.5a+2\lambda(2+\varepsilon)=21<11(2+\varepsilon)$. However, for $a=6$, we can replace the lower bound $|\tau(p^a)|>p^{2a}$ by $|\tau(p^a)|>p^{1.5 a}$. This works if $4a>11(2+\varepsilon)$ and this is satisfied with $a=6$ and $\varepsilon=0.01$. Thus, for $a=6$, we get
$$
p>c_{H_6,0.01}^{-1/(24-2.01\times 11)}:=P_6,
$$
which is what we wanted. 
\end{proof}
 
 We are now ready to finish  the proof of Proposition \ref{prop:3}. 
 
 \medskip
 
 \noindent
 {\it Proof of Proposition~\textup{\ref{prop:3}}}. For each $a\in [2,10^{16}]$, we let $P_a$ be such that if $|\tau(p^a)|>p^{1.5 a}$ then $p\le P_a$. Let 
 $$
 n_0:=\prod_{\substack{2\le a\le 10^{16}\\ 2\mid a}} P_a^a.
 $$
 Let $n>n_0^3$ be an odd square. Write $n=n_1n_2$ where $\gcd(n_1,n_2)=1$, and $n_1$ is built up of prime powers $p^{a_p}$ such that $|\tau(p^{a_p})|\le p^{1.5a_p}$. Then $a_p\le 10^{16}$ and $p\le P_{a_p}$. 
 In particular, $n_1\le n_0<n^{1/3}$, so $n_2>n^{2/3}$. Thus, since $|\tau(p^{a_p})|>p^{1.5a_p}$ for all prime powers $p^{a_p}$ dividing $n_2$, we get 
 $$
 |\tau(n)|\ge |\tau(n_2)|>n_2^{1.5}> (n^{2/3})^{1.5}=n,
 $$
 which completes the proof.\qed
 
 \subsection{Proof of Proposition \ref{prop:4}}
 
 We may assume that $k>10$ otherwise the left--hand side of~\eqref{eq:Orbnk} is smaller than $2$ and the inequality trivially holds by Proposition~\ref{prop:2}. If among the elements of 
 \begin{equation}
 \label{eq:list}
 L:=\{n,\tau(n),\ldots, \tau^{(\lfloor k/2\rfloor)}(n)\}
 \end{equation}
 there is an even number, then with $m:=\tau^{(\lfloor k/2\rfloor)}(n)$, we have that $m$ is even and Proposition~\ref{prop:1} shows that 
 $$
 P(\tau^{(k)}(n))=P(\tau^{(k-\lfloor k/2\rfloor)}(m))\ge 3^{k-\lfloor k/2\rfloor-1}+2\ge 3^{k/2-1}+2>0.1\log(k/2)\quad {\text{\rm for}}\quad k>10.
 $$
 Thus, we may assume that all numbers in list $L$ given at \eqref{eq:list} are odd. We look at prime powers $p^a$, where $p^a\| s$ for some number $s$ from list \eqref{eq:list}.
 Let $\omega$ be the number of such primes $p$ and $P$ be the largest. Then $p$'s are odd, $a$'s are even. Further, by the Primitive Divisor Theorem and Table 1 in \cite{BHV}, 
 each of $u_p(a+1)$ for $a\ge 4$, contains a primitive prime factor $q$ which does not divide $u_p(b+1)$ for any even $b<a$. This together with the fact that $|\tau(p^2)|=p^{2\lambda} u_p(3)$ and $|u_p(3)|>1$ is coprime to $p$, shows that for a fixed $p$, there can be at most $\omega$ values of $a$.  This shows that 
 $$
 k/2\le \# L\le \omega^\omega=e^{\omega \log \omega}\le e^{p_{\omega}}<e^P,
 $$
 where we use $p_1<p_2<\cdots$ for the sequence of all prime numbers and the fact that $P> p_{\omega}\ge \omega\log \omega$ (see (3.12) in~\cite{Rosser}). 
 Hence, $P\ge \log(k/2)$, as we wanted. 
 \qed

 \subsection{Proof of Proposition \ref{prop:6}}
 
  For technical reasons, we enlarge $S$ and adjoin the primes $2,5,11$ to it. So, we work with $s$ but in the final answer we need to replace $s$ by $s+3$. 
 We write 
 $$
 {\mathbb Z}_S^*=\{\pm p_1^{a_1}\cdots p_s^{a_s}: a_i\in {\mathbb Z}\}.
 $$
 We want to count the solutions $(p,a)$ where $p$ is an odd prime and $a$ is an even integer to $\tau(p^a)\in {\mathbb Z}_S^*$. Since $\tau(p^a)=p^{\lambda a} u_p(a+1)$
 and $u_p(m)$ has primitive divisors for all even $m\ge 4$ (and $|u_p(2)|>1$ is coprime to $p$), it follows that $a$ can take at most $s$ values and the largest one satisfies $a+1\le P-1$, so  $a+1\le P-2$,
 since $a$ is even and $P$ is odd. Thus, there are at most $P^s$ values of the form $\tau(p^a)$ with $p\in S$. From now on, we assume that $p\not\in S$. In particular, 
 $p\nmid \tau(p)$.  Observe that
 $$
 \tau(p^a)=u_p(a+1)=\frac{\alpha_p^{a+1}-\beta_p^{a+1}}{\alpha_p-\beta_p}.
 $$
 Let $q:=P(a+1)$ and then 
 $$
 \tau(p^a)=\left(\frac{(\alpha_p^{(a+1)/q})^q-(\beta_p^{(a+1)/q})^q}{\alpha_p^{(a+1)/q}-\beta_p^{(a+1)/q}}\right)u_{p}((a+1)/q).
 $$
 Thus, putting $(\alpha_1,\beta_1):=(\alpha_p^{(a+1)/q}, \beta_p^{(a+1)/q})$, we have that
 $$
 \tau(p^a)=F_{q-1}(\alpha_1,\beta_1) u_p((a+1)/q),
 $$
 so we deduce that $F_{q-1}(\alpha_1,\beta_1)\in {\mathbb Z}_S^*$. If $q=3$, then 
 $$
 F_2(\alpha_1,\beta_1)=\alpha_1^2+\alpha_1\beta_1+\beta_1^2=S_1^2-P_1,
 $$
 where $S_1:=\alpha_1+\beta_1$ and $P_1:=\alpha_1\beta_1$. Since $P_1=p^{11(a+1)/q}$,  we get with $(x,y):=(S_1,p^{(a+1)/q})$ that  
 \begin{equation}
 \label{eq:10}
 x^2-y^{11}\in {\mathbb Z}_S^*.
 \end{equation}
 If $q=5$, then
 $$
 F_4(\alpha_1,\beta_1)=(\alpha_1+\beta_1)^4-3(\alpha_1\beta_1)(\alpha_1+\beta_1)^2+\alpha_1\beta_1=(S_1^2-3P_1/2)^2-5P_1^2/4.
$$
Multiplying both sides above by $4\times 5^{10}$, we get 
$$
4\times 5^{10} F_5(\alpha_1,\beta_1)=(5^5(2S_1^2-3P_1))^2-5^{11} P_1^2.
$$
Since $P_1=p^{11(a+1)/q}$, we can take $(x,y):=(5^5\times (2S_1^2-3P_1),5p^{2(a+1)/q})$ and get that
\begin{equation}
\label{eq:11}
4\times 5^{10} F_5(\alpha_1,\beta_1)=x^2-y^{11}\in {\mathbb Z}_S^*.
\end{equation}
Finally, assume that $q\ge 7$. We then have 
$$
F_{q-1}(\alpha_1,\beta_1)=H_{q-1}(S_1^2,P_1),
$$
and $H_{q-1}(x,y)$ is a homogeneous polynomial of degree $(q-1)/2\ge 3$ which is irreducible (the polynomial $H_{q-1}(x,1)$ is of degree $\phi(q)/2=(q-1)/2$ and 
any of its roots spans the maximal real subfield of the cyclotomic field ${\mathbb Q}(e^{2\pi i/q})$). Thus, we get
\begin{equation}
\label{eq:12}
H_{q-1}(x,y)\in {\mathbb Z}_S^*.
\end{equation}
Any solution $(x,y)$ of \eqref{eq:10} or \eqref{eq:12} which is convenient for us must have $y=p^{(a+1)/q}$, which a power of a prime $p$. Since $q$ is known, so is $a$. 
Any solution $(x,y)$ of \eqref{eq:11} which is convenient for us must have $y=5p^{2(a+1)/q}$, so again $p$ and then $a$ are known. Thus, it suffices to count the number of such solutions. 
For \eqref{eq:10} and \eqref{eq:11}, we write 
$$
x^2=y^{11}+A,
$$
where $A=\pm p_1^{a_1}\cdots p_s^{a_s}$. In the case of \eqref{eq:11}, we must also allow factors of $4\times 5^{10}$, which is why we enlarged $S$ to contain $2$ and $5$.  
We may reduce $a_i$ modulo $22$ such as to write them as $a_i=22b_i+r_i$ where $r_i\in \{0,1,\ldots,21\}$. Then we get that putting $z:=\prod_{i=1}^s p_i^{b_i}$, $A':=\prod_{i=1}^s p_i^{r_i}$ 
and $x':=x/z^{11},~y':=y/z^2$, we have 
\begin{equation}
\label{eq:A'}
x'^2=y'^{11}+A'.
\end{equation}
The number of choices of $A'$ is at most $2\times 22^s$. Note that $(x',y')\in {\mathbb Z}_{S}^*$. Further, knowing $y'$ we determine $y$ uniquely since $p$ is not in $S$.  
For each one of these, by Theorem~1 in~\cite{ES}, the number of solutions is at most
$$
7^{11^2(4+9s)} h({\mathbb L})^2,
$$
where ${\mathbb L}$ is some extension of ${\mathbb Q}$ containing three of the roots of $f_{A'}(y):=y^{11}+A'$ and $h({\mathbb L})$ is its class number. The above bound is valid provided that all primes dividing the discriminant of $f_{A'}(y)$ are in $S$, which is why we incorporated $11$ into $S$. We can take ${\mathbb L}$ to
be the field ${\mathbb Q}(A'^{1/11},e^{2\pi i/11})$, where $A'^{1/11}$ is the real $11$th root of $A'$. The absolute values of the discriminant of ${\mathbb K}_1:={\mathbb Q}(A'^{1/11})$ is 
at most $(11|A'|)^{11}$ and of the discriminant of ${\mathbb K}_2:={\mathbb Q}(e^{2\pi i/11})$ is at most $11^{11}$, and the degrees of both fields are at most $11$. So, 
the discriminant $\Delta_{\mathbb L}$ of ${\mathbb L}$ (which is the compositum of ${\mathbb K}_i$, for  $i=1,2$) is in absolute value at most
$$
(11 A')^{11^2}\times 11^{11^2}\le (11^2 A')^{11^2}\le (P^{22s+2})^{11^2}=P^{2\times 11^2(11s+1)}.
$$ 
Thus, 
\begin{equation}
\label{eq:tttt}
|\Delta_{\mathbb L}|<P^{2\times 11^2(11s+1)}.
\end{equation}
From inequalities (3.6) and  (3.7) in \cite{BEG}, we have that 
$$
h({\mathbb L})<5|\Delta_{\mathbb L}|^{1/2}(\log |\Delta_{\mathbb L}|)^{(11^2-1)/2}.
$$
We show that the right--hand side above is smaller that the right--hand side of \eqref{eq:tttt}. Indeed, for that all we have to show is that
$$
5(2\times 11^2(11s+1)\log P)^{(11^2-1)/2}<P^{11^2(11s+1)}.
$$
Since $5<(11^2)^{1/2}$, we have that 
$$
5(2\times 11^2(11s+1)\log P)^{(11^2-1)/2}<(2\times 11^2(11s+1)\log P)^{11^2},
$$
so taking $11^2$ roots, it suffices to show that 
$$
2\times 11^2 (11s+1)\log P<P^{11s+1}.
$$
Since $P>2\log P$, it suffices to show that $P^{11s} >11^2(11s+1)$, which is implied by $P^x>(x+1)^3$ with $x=11s$, and further by, $2^x>(x+1)^3$, and this does indeed hold for all $x\ge 11$. Thus,
$$
h({\mathbb L})\le P^{2\times 11^2(11s+1)},
$$
so the number of solutions of \eqref{eq:A'} is at most 
$$
7^{11^2(4+9s)}\times P^{4\times 11^2(11s+1)}<P^{11^2(4+9s+4+44s)}=P^{11^2(53s+8)},
$$ 
 where we use the fact that $P>7$ (because $11$ has been incorporated into $S$). This is for a fixed $A'$ and there are at most $2\times 22^s=2^{s+1}\times 11^s<(1/2)P^{2s}$ such equations
 for each of $a=2,4$, which gives us a number of possibilities for $p$ at most
 $$
 P^{2s+1}\times P^{11^2(53 s+8)}=P^{6415s+969}.
 $$
 Now we fix $q\ge 7$. In this case, we have the equation
 $$
 H_{q-1}(x,y)\in {\mathbb Z}_S^*.
 $$
This is a Thue--Mahler equation of degree $(q-1)/2\ge 3$ and $H_{q-1}(x,y)$ is irreducible. The number of equivalence classes of solutions $(x,y)\in ({\mathbb Z}_S^*)^2$ 
(where by equivalence classes we mean that $(x,y)\equiv (x',y')$ if $(x,y)=\lambda(x',y')$ for some $\lambda\in {\mathbb Z}_S^*$) is, by a result of Evertse \cite{Ev}, at most
$$
(5\times 10^6(q-1)/2)^s\le (10^7 P)^s<P^{8s}.
$$
Our solutions for which $y$ is a power of a prime $p$ not in $S$ are in inequivalent classes. Thus, the above bound bounds the acceptable number of primes $p$.  
This is for a $q$ fixed and $q\le P$. Each of these determines $p$ and then we need to multiply by another factor of $P$ to account for the number of possibilities for $a$. Thus, the total number
is at most 
$$
P(P^{8s+1}+P^{6435s+969})+P^s<P^{6500(s+1)}.
$$
Now we replace $s$ by $s+4$ (we need to add $3$ for the primes $2,5,11$ and another $1$ for the place at infinity which is always incorporated in both the results from~\cite{Ev} and~\cite{ES}), and we get the desired result. 
\qed

 \subsection{Proof of Proposition \ref{prop:5}}
 
 By the multiplicativity of $\tau$, it suffices to show that there is no prime odd $p$ and even positive integer $a$ such that $P(\tau(p^a))\le 7$. We write again $d:=\gcd(\tau(p),p^{11})=p^{\lambda}$ and 
 $$
 \tau(p^a)=p^{\lambda a} u_{p}(a+1).
 $$
 Then $\{u_p(m)\}_{m\ge 1}$ is a Lucas sequence. If $a\ge 8$, then, since $P(u_p(a+1))\le 7$, we conclude that $u_p(a+1)$ has no primitive divisors. As we saw, there are only finitely many possibilities for 
 $(a,\gamma_p,\delta_p)$ and they can all be read from Table 1 in \cite{BHV} as in the proof of Proposition~\ref{prop:2}. This gives no solutions. Assume next that~$a=6$. Then $P(u_p(7))\le 7$. Hence the largest prime factor of $u_p(7)$ is either at most $5$ (smaller than $7$ and incongruent to $\pm 1$ modulo $7$), or~$7$. But if $7\mid u_p(7)$, then $7$ must divide the discriminant of the sequence (which is the same as the discriminant $(\gamma_p-\delta_p)^2$ of the characteristic polynomial) and such primes do not qualify as primitive primes, so these instances also appear in Table~1 in \cite{BHV}. Similarly, if $a=4$, then $u_p(a+1)=u_p(5)$.  The only primes which can divide $u_p(q)$, with a prime $q$ are either $q$ itself (if $q$ divides the discriminant of the sequence) or primes which are congruent to $\pm 1$ modulo~$q$. 
 Thus, the only possibility is that $u_p(5)$ is a power of $5$, so again $u_p(5)$ has no primitive prime factors and the possibilities can be read off from Table 1 in \cite{BHV}. 
 It remains to consider the case $a=2$. Since each of $\tau(3^2),~\tau(5^2),~\tau(7^2)$ has a prime factor larger than $7$, it follows that $p\ge 11$.   Further, $p\nmid \tau(p)$. Now 
 $$
 \tau(p^2)=\tau(p)^2-p^{11}=\pm 3^a 5^b 7^c.
 $$
 We  use congruences with the Ramanujan function to get information on the exponents $a,b,c$. Here are the congruences that we use. In what follows, $\sigma_k(n)$ is the sum of the $k$th powers of the divisors of $n$.
 \begin{itemize}
 \item[(i)] $\tau(n)\equiv n\sigma_9(n)$ if $n\equiv 0,1,2,4\pmod 7$;
 \item[(ii)] $\tau(n)\equiv n^{-30}\sigma_{71}(n)\pmod {5^3}$ if $n\not\equiv 0\pmod 5$;
 \item[(iii)] $\tau(n)\equiv n^{-610}\sigma_{1231}(n)\pmod {3^6}$ if $n\equiv 1\pmod 3$.
 \end{itemize}
 Assume now that $c\ge 1$. In particular, $p^{11}\equiv \tau(p)^2\pmod 7$, so $p$ is a quadratic residue modulo $7$. Thus, we can apply congruence (i) above to $n=p^2$  to get that 
 $$
 0\equiv \tau(p^2)\pmod 7\equiv p^2\sigma_9(p^2)\pmod 7\equiv p^2(1+p^9+p^{18})\pmod 7\equiv 3p^2\pmod 7,
 $$
 since $p^3\equiv 1\pmod 7$. Hence, $p=7$, a contradiction. Thus, $c=0$. 
 
 Assume next that $b\ge 1$.  We use the congruence (ii) above for $n=p^2$. Note also that since $p^{11}\equiv \tau(p)^2\pmod 5$, we get that $p$ is a quadratic residue modulo $5$. Thus, $p^2\equiv 1\pmod 5$. 
 By (ii) above for $n=p^2$, we get
 $$
 0\equiv p^{-60} \sigma_{71}(p^2)\pmod 5\equiv 1+p^{71}+p^{142}\pmod 5\equiv 2+p\pmod 5,
 $$
 so $p\equiv 3\pmod 5$, which contradicts the fact that $p$ is a quadratic residue modulo $5$. Thus, $b=0$. 
 
 Thus, the only possibility is 
 $$
 \tau(p)^2-p^{11}=\pm 3^a,
 $$
 and $a\ne 0$ since, otherwise we get again the Catalan equation. Hence, $a\ge 1$. Next, since $p^{11}\equiv \tau(p)^2\pmod 3$, it follows that $p\equiv 1\pmod 3$ so $p^3\equiv 1\pmod 9$ and $p^2+p+1\equiv 3\pmod 9$. We now apply (iii) to $n=p^2$ to get that
 $$
 \pm 3^a\equiv \tau(p^2)\pmod {3^6}\equiv p^{-1220}\sigma_{1231}(p^2)\pmod {3^6}\equiv p^{-1220} (1+p^{1231}+p^{2432})\pmod {3^6}.
 $$  
 Since $p^3\equiv 1\pmod 9$, we get that $1+p^{1231}+p^{2432}\equiv 1+p+p^2\pmod 9\equiv 3\pmod 9$. So, in the above congruence, we get
 $$
 \pm 3^a\equiv 3p^{-1220} \pmod 9,
 $$
 which shows that $a=1$ and $\pm 1\equiv p^{-1220}\pmod 3$. Since $p\equiv 1\pmod 3$, we deduce that the sign is $+$. Hence, we get the equation
$$
\tau(p^2)=\tau(p)^2-p^{11}=3.
$$
With $(x,y):=(\tau(p),p)$, we get
\begin{equation}
\label{eq:Barros}
x^2-3=y^{11}.
\end{equation}
This can be reduced to one of several Thue equations. Indeed, the left--hand side factors in the Euclidean ring ${\mathbb Z}[{\sqrt{3}}]$ as $(x-{\sqrt{3}})(x+{\sqrt{3}})$, and the two factors $x+{\sqrt{3}}$ and 
$x-{\sqrt{3}}$ are coprime since $x$ is even. Thus, $y=(a+b{\sqrt{3}})(a-b{\sqrt{3}})$ for some integers $a,~b$ and $(a+b{\sqrt{3}})^{11}$ is associated to one of $x\pm {\sqrt{3}}$. Up to changing $b$ to $-b$, we may assume that
$(a+b{\sqrt{3}})^{11}$ is associated to $x+{\sqrt{3}}$. Thus, 
$$
x+{\sqrt{3}}=(a+b{\sqrt{3}})^{11}\zeta,
$$
 where $\zeta$ is a unit. All units in ${\mathbb Z}[{\sqrt{3}}]$ are of the form $\pm (2+{\sqrt{3}})^k$ for some integer $k$. We may reduce $k$ modulo $11$, so replace $k$ by $11k_0+r$, where $r\in \{0,1,\ldots,10\}$
 and replace $a+b{\sqrt{3}}$ by $(a+b{\sqrt{3}})(2+{\sqrt{3}})^{k_0}$. Thus, we get
 $$
 x+{\sqrt{3}}=\pm (a+b{\sqrt{3}})^{11}(2+{\sqrt{3}})^r\quad {\text{\rm for~some}}\quad r\in \{0,1,\ldots,10\}.
 $$
 Conjugating and eliminating $x$, we get
 $$
 \frac{(a+b{\sqrt{3}})^{11}(2+{\sqrt{3}})^{r}-(a-b{\sqrt{3}})^{11}(2-{\sqrt{3}})^r}{2\sqrt{3}}=\pm 3.
 $$
 These are the Thue equations. For example, for $r=2$, we get
 \begin{eqnarray*}
\pm 3 &=& 4 a^{11}+77 a^{10} b+660 a^9 b^2+3465 a^8 b^3+11880 a^7 b^4+ 29106 a^6 b^5+49896 a^5 b^6\\
& + & 62370 a^4 b^7+53460 a^3 b^8+31185 a^2 b^9+10692 a b^{10}+1701 b^{11}.
\end{eqnarray*}
At any rate, equation \eqref{eq:Barros} (and several others of the type $x^2-D=y^n$ for various $D\in [1,100]$ and various exponents $n\ge 3$) have been solved in Carlos Barros' Ph.D. dissertation \cite{Barros}. The only integer solutions are $(x,y)=(\pm 2,1)$, which are not convenient for us. This completes the proof.  
\qed

\subsection{Comments}

One may wonder what happens if the Lehmer conjecture is false. Then we believe that for most $n$, $\tau^{(2)}(n)=0$, and we provide some heuristics below. Indeed, it is known that there is a constant $c>0$ such that 
on a set of $n$ of asymptotic density $1$, $\tau(n)$ is a multiple of all prime powers $p^a<c_1\log n/\log\log n$. This was done in Proposition 1 in~\cite{GLY} for the $n$th coefficient of the modular
form associated to an elliptic curve over ${\mathbb Q}$ without complex multiplication and the same argument works with the Ramanujan function $\tau(n)$. Let $p$ be such that 
$\tau(p)=0$ and for each prime $q$ let $a_q$ be the minimal positive integer such that $\tau(q^{a_q})$ is divisible by $p$ and the exponent $\nu_p(\tau(q^{a_q}))$ is odd. By the arguments from Lemma \ref{lem:1}, we have that $a_q+1$ is either the order of appearance of $p$ in the Lucas sequence $\{u_q(m)\}_{m\ge 0}$, which we denote by $\ell_p(q)$, or $p\ell_p(q)$. Note that 
$\ell_p(q)$ is at most $q+1$. Let $a_q(n)$ be the exponent of 
$q$ in the factorization of $\tau(n)$. From what we have said, this is at least as large as $\lfloor (1/\log q)\log(c_1\log n/\log\log n)\rfloor$ for most $n$. If $a_q(n)+1$ is an odd multiple of $a_q+1$, 
then $\tau(q^{a_q(n)})$ is an integer multiple of $\tau(q^{a_q})$, so it is  zero. So, we need to look at the situation when $a_q(n)+1$ is not an odd multiple of $a_q+1$. Let us assume that $a_q(n)$ 
behaves like a random large number with respect to being in  a certain residue class modulo $2(a_q+1)$. Thus, let us assume that the  probability that $a_q(n)+1$ is not an odd multiple of $a_q+1$ is $1-1/(2(a_q+1))$ and let us assume that these probabilities are independent as $q$ ranges over small  primes different than $p$. Then the probability that 
$\tau(\tau(n))$ is not zero would therefore be at most 
\begin{equation}
\label{eq:prod}
\prod_{\substack{2\le q<c\log n/\log\log n \\ 2\ne p}} \left(1-\frac{1}{2(a_q+1)}\right),
\end{equation}
and since the sum
$$
\sum_{\substack{q\ge 2\\ q\ne p}} \frac{1}{2(a_q+1)}\ge \frac{1}{2p}\sum_{\substack{q\ge 2\\ q\ne p}} \frac{1}{\ell(q)}\ge \frac{1}{p}\sum_{q} \frac{1}{q+1}
$$
is divergent, it follows that the product shown at \eqref{eq:prod} tends to $0$ with $n$.

\section*{acknowledgements}
We thank Michael Bennett and Samir Siksek for useful correspondence and for pointing out to us the contents of the dissertation \cite{Barros}. F.~L. and P. S. were supported by  grant RTNUM19 from CoEMaSS, Wits, South Africa. S. M. worked on this paper as part of his Ph.D. dissertation. This work was done during a visit of P.~S. at the School of Mathematics of Wits University in Spring 2020. He thanks this Institution for 
hospitality.

 \end{document}